\documentclass[11pt]{amsart}

\usepackage{a4wide}
\usepackage{amssymb}
\usepackage{mathrsfs}
\usepackage{enumerate}
\usepackage{esint}
\usepackage{titletoc}
\usepackage{mathscinet}
\usepackage[colorlinks=true, urlcolor=blue, linkcolor=blue, citecolor=green]{hyperref}
\usepackage{mathtools}
\usepackage{xcolor}
\usepackage[nameinlink]{cleveref}

\theoremstyle{plain}
\newtheorem{theorem}{Theorem}[section]
\newtheorem{lemma}[theorem]{Lemma}
\newtheorem{corollary}[theorem]{Corollary}
\newtheorem{proposition}[theorem]{Proposition}

\theoremstyle{definition}
\newtheorem{definition}[theorem]{Definition}
\newtheorem{remark}[theorem]{Remark}

\numberwithin{equation}{section}

\DeclareMathOperator*{\tr}{tr}

\DeclareMathOperator*{\dist}{dist}
\DeclareMathOperator*{\osc}{osc}

\makeatletter
\@namedef{subjclassname@2020}{\textup{2020} Mathematics Subject Classification}
\makeatother

\title[Logarithmic Hessian estimates]{Logarithmic Hessian estimates for non-uniformly elliptic equations in non-divergence form}

\author{Jongmyeong Kim}
\address{Institute of Mathematics, Academia Sinica, Taipei 106319, Taiwan}
\email{jmkim@gate.sinica.edu.tw}

\author{Se-Chan Lee}
\address{School of Mathematics, Korea Institute for Advanced Study, 02455 Seoul, Republic of Korea}
\email{sechan@kias.re.kr}

\subjclass[2020]{35B65, 35D40, 35J70}

\keywords{non-uniformly elliptic equations, logarithmic Hessian estimates, viscosity solutions, strong solutions}

\begin{document}
\begin{abstract}
  We establish uniform logarithmic Hessian estimates for solutions of non-uniformly elliptic equations in non-divergence form. The proof is based on a measure estimate from convex analysis without localization or covering arguments. Moreover, we construct a counterexample that rules out uniform power-type Hessian estimates and shows the sharpness of the logarithmic exponent in the plane. Finally, we extend our estimates to strong solutions and apply them to obtain regularity results for Grushin-type equations.
\end{abstract}

\maketitle

\section{Introduction}\label{sec-introduction}
The regularity theory for second-order partial differential equations has been extensively developed in both divergence and non-divergence form. For linear uniformly elliptic equations in divergence form, the De Giorgi--Nash--Moser theory~\cite{DGio57,Nas58,Mos60,Mos61} yields the H\"older continuity of weak solutions. For the corresponding linear equations in non-divergence form, the Aleksandrov--Bakelman--Pucci (ABP) estimate~\cite{Ale66,Bak61,Puc66} and the Krylov--Safonov theory~\cite{KS79,KS80} provide the maximum principle, Harnack inequality, and H\"older estimates. Since these arguments essentially rely on ellipticity rather than linearity, they have become fundamental tools in the theory of fully nonlinear uniformly elliptic equations; we refer to the monographs~\cite{CC95, GT01}. 

The next natural question concerns the Sobolev-type regularity of solutions. Lin~\cite{Lin86} first proved that solutions of linear uniformly elliptic equations in non-divergence form satisfy a $W^{2,\varepsilon}$ estimate for some small universal exponent $\varepsilon>0$. Caffarelli~\cite{Caf89} used this $W^{2, \varepsilon}$ estimate as the starting point of a Calder\'on--Zygmund perturbation argument, and obtained $W^{2, p}$ estimates for fully nonlinear convex equations when $p>n$, under suitable small oscillation assumptions on the operator. We refer to the works of Wang~\cite{Wan92, Wan92a} for the parabolic counterpart. 

Moreover, Armstrong--Silvestre--Smart~\cite{ASS12}, based on the small perturbation theory of Savin~\cite{Sav07}, showed that the singular set of a solution of fully nonlinear equations has Hausdorff dimension at most $n-\varepsilon$. This observation describes the importance of the quantitative bounds for $\varepsilon$. Later, Mooney~\cite{Moo19} provided a proof of the Krylov--Safonov theorem without localization, and obtained the quantitative lower bound $\varepsilon \geq c(n)(\lambda/\Lambda)^{n-1}$ for uniformly elliptic equations with ellipticity constants $0<\lambda \leq \Lambda$; see also \cite{NT23}. We point out that all of these results are concerned with \emph{uniformly elliptic} equations.\medskip

The goal of this paper is to develop uniform logarithmic Hessian estimates for \emph{non-uniformly} elliptic equations in non-divergence form. These estimates can be viewed as a non-uniformly elliptic analogue of Lin's classical $W^{2, \varepsilon}$ estimate. To illustrate the setting, let $B_1$ denote the unit ball in $\mathbb{R}^n$ and consider the linear equation
\begin{equation*}
    a_{ij}(x)D_{ij}u=f \quad \text{in $B_1$},
\end{equation*}
where, after normalization, the continuous ellipticity function $\lambda : B_1 \to (0,1]$ satisfies
\begin{equation*}
    \lambda(x)|\xi|^2 \leq a_{ij}(x)\xi_i\xi_j \leq |\xi|^2 \quad \text{for every $\xi \in \mathbb{R}^n$}.
\end{equation*}
Rather than imposing a uniform bound on $\|\lambda^{-1}\|_{L^\infty(B_1)}$, we quantify the degeneracy through
\begin{equation*}
    \|\lambda^{-1}\|_{L^p(B_1)} \quad \text{for some $p>n-1$}.
\end{equation*}
Under the assumption $\lambda^{-1} \in L^p(B_1)$ and a suitable integrability condition on $f/\lambda$, we prove that the Hessian of solutions enjoys logarithmic integrability of order $p/(n-1)$, with estimates independent of any pointwise lower bound for $\lambda$.

Let us briefly review the literature on non-uniformly elliptic equations. In divergence form, Trudinger~\cite{Tru71,Tru73} established local estimates under integrability assumptions on the ellipticity. Recently, Bella--Sch\"affner~\cite{BS21} presented sharp integrability conditions on the ellipticity that guarantee local boundedness and the Harnack inequality. We also mention that Fabes--Kenig--Serapioni~\cite{FKS82} derived local H\"older estimates, provided that the coefficients satisfy an $A_2$-Muckenhoupt weight condition. These arguments, however, rely heavily on variational methods such as Moser iteration, and so do not directly extend to equations in non-divergence form. 

In non-divergence form, Armstrong--Smart \cite{AS14} established an oscillation decay estimate and stochastic homogenization results. Their proof of oscillation decay combines localization with a measure-theoretic argument based on the Besicovitch covering theorem. More recently, the authors~\cite{KL25} established the ABP estimate for $L^n$-strong solutions of non-uniformly elliptic equations in non-divergence form. Finally, Bowman~\cite{Bow26} proved a logarithmic weak Harnack inequality under the sharp condition $p>n-1$, and the Harnack inequality when $p$ is larger than some dimensional constant.

\subsection{Main result}
Let $\lambda : B_1 \to (0, 1]$ be a continuous ellipticity function. We define the \emph{Pucci extremal operators} $\mathcal{M}_{\lambda, 1}^{\pm}$ (which are the normalized version of those in \cite{KL25}) by
\begin{equation*}
    \begin{aligned}
        \mathcal{M}_{\lambda, 1}^+(M)(x)&\coloneqq \sum_{e_i \geq 0} e_i(M) +\lambda(x) \sum_{e_i<0} e_i(M),\\
	\mathcal{M}_{\lambda, 1}^-(M)(x)&\coloneqq \lambda(x) \sum_{e_i \geq 0} e_i(M) +\sum_{e_i<0} e_i(M), 
    \end{aligned}
\end{equation*}
where $x \in B_1$, $M \in \mathcal{S}^n \coloneqq \{M \in \mathbb{R}^{n \times n} : M^T=M\}$, and $e_i(M)$'s are the eigenvalues of $M$. For a constant ellipticity $\lambda(\cdot) \equiv \lambda_0 \in (0, 1]$, they reduce to the classical Pucci extremal operators; see \cite{CC95, CIL92} for instance.

Our main result is a logarithmic version of the classical $W^{2, \varepsilon}$ estimate. We refer to Section~\ref{sec-preliminaries} for the definitions of the curvature quantity $\underline{\Theta}_u(x; B_1)$ and $C$-viscosity solutions.

\begin{theorem}[Global logarithmic Hessian estimate]\label{thm-log-hessian-estimate}
    Let $f,\lambda\in C(B_1)$ satisfy   
    \begin{equation*}
        0<\lambda \leq 1, \quad\lambda^{-1} \in L^p(B_1),  \quad \frac{f^+}{\lambda} \in L^{\sigma}(B_1)
    \end{equation*}
    for some $p>n-1$ and $\sigma=\max\{p/(p-n+1), n\}$. Suppose that $u\in C(B_1)\cap L^{\infty}(B_1)$ is a $C$-viscosity supersolution of 
   \begin{equation*}
       \mathcal{M}_{\lambda, 1}^-(D^2u) \leq f \quad  \text{in $B_1$}.
  \end{equation*}
Then there exists a constant $C>0$, depending only on $n$ and $p$, such that 
\begin{equation*}
    \begin{aligned}
         \int_{B_1} \log^{\frac{p}{n-1}} (1+ \underline{\Theta}_u(x; B_1)) \,\mathrm{d}x &\leq C(1+\|\lambda^{-1}\|_{L^p(B_1)}^p)\\
         &\quad+C\log^{\frac{p}{n-1}}\left(1+\|u\|_{L^{\infty}(B_1)}+\left\|\frac{f^+}{\lambda}\right\|_{L^{\sigma}(B_1)}\right).
    \end{aligned}
\end{equation*}
\end{theorem}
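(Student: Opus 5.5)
We follow Mooney's approach to the $W^{2,\varepsilon}$ estimate without localization. The quantity $\underline{\Theta}_u(x;B_1)$ should measure the smallest curvature of a concave paraboloid touching $u$ from below at $x$ in $B_1$. The plan is to prove a level-set (distribution) estimate of the form
\begin{equation*}
|\{x\in B_1 : \underline{\Theta}_u(x;B_1) > t\}| \lesssim \text{(something integrable against } \tfrac{d}{dt}\log^{p/(n-1)}(1+t)\text{)},
\end{equation*}
and then integrate in $t$ via the layer-cake formula. After normalizing by $K := 1+\|u\|_{L^\infty}+\|f^+/\lambda\|_{L^\sigma}$, it suffices to treat the case $K\lesssim 1$. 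The term $\log^{p/(n-1)}(1+K)$ arises from rescaling $u\mapsto u/K$, since $\log(1+Kt)\le \log(1+K)+\log(1+t)$.

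The key step is the measure estimate from convex analysis. For $t>0$, slide concave paraboloids $P_{y}(x) = -\tfrac{t}{2}|x-y|^2 + c$ from below with vertices $y$ ranging over a ball. Let $E_t$ be the contact set. On $E_t$ the vertex map $x\mapsto y(x)= x + Du(x)/t$ is defined a.e. (in the viscosity setting we would first pass to the inf-convolution or use the semiconcavity of the contact envelope). Its image covers a set of measure $\gtrsim 1$ when $t\gtrsim \|u\|_\infty$, and the complement $\{\underline\Theta_u>t\}$ is controlled through $|B_1\setminus E_t|$.

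At contact points we have $D^2u\ge -tI$ and $\mathcal{M}^-_{\lambda,1}(D^2u)\le f$. Hence the positive eigenvalues $e_i^+$ of $D^2u$ satisfy $\lambda\sum e_i^+ \le f^+ + (n-1)t$, which gives the Jacobian bound
\begin{equation*}
\det\!\Big(I+\tfrac{D^2u}{t}\Big)\lesssim \Big(1+\tfrac{f^+}{\lambda t}+\tfrac{1}{\lambda}\Big)\cdot(\text{bounded factors}).
\end{equation*}
This bound is too crude. The sharper use is that only one eigenvalue can be large, of size $1/\lambda$, while the other $n-1$ directions are controlled by $t$. This is the source of the $n-1$ in the exponent.

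Rather than a covering argument, we iterate dyadically in $t$, over $t_k = 2^k$ or $t_k = e^{k}$. Comparing the contact sets at successive levels via the area formula yields a recursion of the form
\begin{equation*}
|\{\underline\Theta>t_{k+1}\}| \le |\{\underline\Theta>t_k\}| - c\,\frac{|E_{t_{k+1}}\setminus E_{t_k}|}{\ldots},
\end{equation*}
in which the loss per step is governed by $\int \lambda^{-1}$ over the new contact region, measured in a one-dimensional averaged sense. Using Hölder with $\lambda^{-1}\in L^p$, we expect the total measure above level $t$ to decay like $\|\lambda^{-1}\|_p^p/\log^{p/(n-1)}(t)$. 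The exponent $\sigma=\max\{p/(p-n+1),n\}$ is exactly what the Hölder pairing of $f^+/\lambda$ against the factor $\lambda^{-(n-1)/p}$-type weights requires, together with the ABP-type need for $L^n$.

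The main obstacle is making this Jacobian/Hölder step produce the logarithmic, rather than power-type, decay with the precise exponent $p/(n-1)$. Power decay is false by the paper's counterexample, so the argument must exploit a weak tail: summing over $k$, the bound $\sum_k |A_k|\, k^{(n-1)/p}$-type balanced against $\|\lambda^{-1}\|_p$ yields the layer-cake integral. As a first attempt, we would bound $|A_k|\le (\int_{A_k}\lambda^{-p})^{\frac{n-1}{p}\cdot}\cdots$, sum using Hölder in $k$, and absorb. A secondary technical point is the justification for $C$-viscosity supersolutions. For this we would use the continuity of $\lambda$ and $f$ to approximate locally uniformly elliptic problems, or apply the results on contact sets from Section~\ref{sec-preliminaries} directly.
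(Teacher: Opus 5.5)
Your outline has the right global shape (Mooney-type sliding paraboloids, dyadic levels, rescaling by $K$, inf-convolution for viscosity solutions). However, the two steps that actually produce the estimate are either wrong or missing. First, your Jacobian analysis is backwards. From $D^2u\ge -tI$ and $\mathcal{M}^-_{\lambda,1}(D^2u)\le f$, what is controlled is the \emph{smallest} eigenvalue (it is at most $f^+/(n\lambda)$), while $n-1$ eigenvalues may each be of order $t/\lambda$. For example, the matrix with eigenvalues $-t/2$ and $t/(2(n-1)\lambda)$ (multiplicity $n-1$) satisfies both constraints with $\mathcal{M}^-_{\lambda,1}=0$, yet $\det(I+D^2u/t)\ge c_n\lambda^{-(n-1)}$. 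The correct pointwise bound is $\det(I+D^2u/t)\le C[\lambda^{-(n-1)}+\lambda^{-(n-1)}f^+/(t\lambda)+(f^+/(t\lambda))^n]$. It is exactly the integrability $\lambda^{-(n-1)}\in L^{p/(n-1)}$ that yields $\tau=p/(n-1)$, and Young's inequality on the middle term dictates $\sigma$. A Jacobian of order $\lambda^{-1}$, as you claim, would lead to the exponent $p$, so it cannot be the source of the $n-1$.

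Second, the recursion you write is empty until you supply the comparison between consecutive contact sets, which is the heart of the proof. For $F\subset B_1\setminus A_a(u)$, take the paraboloids of opening $-2a$ tangent from below to the $a$-convex envelope $\Gamma_u^a$ at points of $F$, and slide them up to touch $u$ on $E$. Letting vertices range over a fixed ball, as you propose, gives only an ABP-type lower bound on the contact set. With the envelope-tangent paraboloids, the supersolution property forces $E\subset A_{2a}(u)\setminus A_a(u)$, and Mooney's convex-analysis lemma gives $|F|\le|V|$ for the vertex set $V$. The area formula then yields $|F|\lesssim\int_{A_{2a}\setminus A_a}[\lambda^{-(n-1)}+\cdots]$: the whole non-contact set at level $a$ is paid for by the Jacobian on the \emph{new} contact region. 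For a global estimate you must also keep $E$ inside $B_1$. The paper normalizes $0\le u\le 2^{-8n}$ and uses $F_k=B_{1-4\rho_k}\setminus A_{2^k}(u)$ with $\rho_k=2^{-4n-k/2}$, at the cost of a boundary layer of size $C2^{-k/2}$.

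Finally, the weak-type decay $|\{\underline{\Theta}_u>t\}|\lesssim\log^{-\tau}t$ that you expect is not enough, since the layer-cake integral against $\mathrm{d}\log^{\tau}(1+t)$ then diverges logarithmically. Instead, multiply the recurrence $|B_1\setminus A_{2^k}(u)|\le C\int_{D_k}\lambda^{-(n-1)}+C2^{-k}(1+\|\lambda^{-1}\|_{L^p}^p)+C2^{-k/2}$, with disjoint $D_k=A_{2^{k+1}}(u)\setminus A_{2^k}(u)$, by the weights $(k+1)^{\tau-1}$ (not $k^{(n-1)/p}$). H\"older then gives $\|\lambda^{-1}\|_{L^p}^{n-1}(\sum_k(k+1)^{\tau}|D_k|)^{(\tau-1)/\tau}$. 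Abel summation bounds $\sum_k(k+1)^\tau|D_k|\le\tau\sum_k(k+1)^{\tau-1}|B_1\setminus A_{2^k}(u)|$, which lets you absorb this term on the left.
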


The constant $C>0$ in Theorem~\ref{thm-log-hessian-estimate} is independent of the pointwise lower bound of $\lambda$. This is the essential difference from the uniformly elliptic theory: the degeneracy is encoded in the $L^p$ norm of $\lambda^{-1}$. The estimate holds on the whole ball $B_1$, without any boundary condition or continuity assumption on $u$, $f$ or $\lambda$ at $\partial B_1$. 

An application of Theorem~\ref{thm-log-hessian-estimate} to both $u$ and $-u$ implies the following consequence.

\begin{corollary}\label{cor-log-hessian}
    Let $f,\lambda\in C(B_1)$ satisfy   
    \begin{equation*}
        0<\lambda \leq 1, \quad\lambda^{-1} \in L^p(B_1),  \quad \frac{f}{\lambda} \in L^{\sigma}(B_1)
    \end{equation*}
    for some $p>n-1$ and $\sigma=\max\{p/(p-n+1), n\}$. Suppose that $u\in C(B_1)\cap L^{\infty}(B_1)$ is a $C$-viscosity solution of 
   \begin{equation*}
     \mathcal{M}_{\lambda, 1}^+(D^2u) \geq -|f|  \quad \text{and} \quad \mathcal{M}_{\lambda, 1}^-(D^2u) \leq |f| \quad  \text{in $B_1$}.
  \end{equation*}
Then $u$ is twice differentiable almost everywhere in $B_1$, and there exists a constant $C>0$, depending only on $n$ and $p$, such that 
\begin{equation*}
    \begin{aligned}
         \int_{B_1} \log^{\frac{p}{n-1}} (1+\|D^2u\|) \,\mathrm{d}x &\leq C(1+\|\lambda^{-1}\|_{L^p(B_1)}^p)\\
         &\quad+C\log^{\frac{p}{n-1}}\left(1+\|u\|_{L^{\infty}(B_1)}+\left\|\frac{f}{\lambda}\right\|_{L^{\sigma}(B_1)}\right).
    \end{aligned}
\end{equation*}
\end{corollary}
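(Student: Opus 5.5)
We prove the corollary by applying Theorem~\ref{thm-log-hessian-estimate} twice. The first application is to $u$ itself. Since $\mathcal{M}_{\lambda,1}^-(D^2u)\le |f|$ in the $C$-viscosity sense and $|f|^+/\lambda=|f|/\lambda\in L^\sigma(B_1)$, the theorem gives the bound for $\int_{B_1}\log^{p/(n-1)}(1+\underline{\Theta}_u(x;B_1))\,\mathrm{d}x$. The second application is to $v\coloneqq -u$. The identity $\mathcal{M}^-_{\lambda,1}(-M)=-\mathcal{M}^+_{\lambda,1}(M)$ follows directly from the eigenvalue definition: negating $M$ swaps the positive and negative eigenvalues. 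Hence the inequality $\mathcal{M}^+_{\lambda,1}(D^2u)\ge -|f|$ for $u$ becomes $\mathcal{M}^-_{\lambda,1}(D^2v)\le |f|$ for $v$. This transfer is legitimate at the level of $C$-viscosity test functions, because a test function touching $v$ from below is the negative of one touching $u$ from above. Since $\|v\|_{L^\infty}=\|u\|_{L^\infty}$, we obtain the same bound for $\underline{\Theta}_{-u}(x;B_1)$.

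The next step links these curvature quantities to the Hessian. By the definition in Section~\ref{sec-preliminaries}, $\underline{\Theta}_u(x;B_1)$ is the infimal opening of paraboloids touching $u$ from below at $x$ within $B_1$, and $\underline{\Theta}_{-u}$ plays the same role for paraboloids touching $u$ from above. Set $\Theta_u\coloneqq\max\{\underline{\Theta}_u,\underline{\Theta}_{-u}\}$. On the set where $\Theta_u(x;B_1)<\infty$, $u$ is touched from above and below by paraboloids of opening $\Theta_u(x;B_1)$. Both estimates imply that $\log^{p/(n-1)}(1+\Theta_u)$ is integrable, so $\Theta_u<\infty$ almost everywhere. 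We then use the standard argument from Caffarelli--Cabr\'e (Lemma 1.3 / Ch.~7 of \cite{CC95}). Two-sided touching by paraboloids of bounded opening at almost every point of $B_1$ implies that $u$ is pointwise twice differentiable almost everywhere. Moreover, at such points $\|D^2u(x)\|\le C(n)\,\Theta_u(x;B_1)$, possibly up to a dimensional factor depending on the normalization of the opening in the definition.

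Finally, we use $\log(1+Ct)\le C'+\log(1+t)$ together with the elementary inequality $(a+b)^q\le 2^{q}(a^q+b^q)$. These give
\begin{equation*}
\log^{\frac{p}{n-1}}(1+\|D^2u\|)\le C\Bigl(1+\log^{\frac{p}{n-1}}(1+\underline{\Theta}_u)+\log^{\frac{p}{n-1}}(1+\underline{\Theta}_{-u})\Bigr).
\end{equation*}
We integrate over $B_1$ and insert the two estimates, using $|B_1|\le C(n)$ to absorb the constant term into $C(1+\|\lambda^{-1}\|^p_{L^p})$. This yields the stated inequality.

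The main obstacle is the almost-everywhere twice differentiability. The measure-theoretic step goes from the sets $\{\Theta_u\le k\}$ to pointwise second-order Taylor expansions. We plan to cite this directly from \cite{CC95}. Their argument uses Lebesgue density points of $\{\Theta_u\le k\}$ together with $C^{1,1}$-type control on that set, and it does not use ellipticity. If the paper's $\underline{\Theta}$ is defined on arbitrary subsets rather than exactly as in \cite{CC95}, we will first check that the definitions agree up to constants.
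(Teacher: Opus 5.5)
Your proposal is correct and matches the paper's proof: you apply Theorem~\ref{thm-log-hessian-estimate} to $u$ and to $-u$ (via $\mathcal{M}^-_{\lambda,1}(-M)=-\mathcal{M}^+_{\lambda,1}(M)$), deduce $\Theta_u<\infty$ a.e., invoke the Calder\'on--Zygmund (generalized Rademacher--Stepanov) theorem to get a.e.\ twice differentiability with $-\underline{\Theta}_u I\leq D^2u\leq \overline{\Theta}_u I$, and then combine the two logarithmic bounds. The only difference is cosmetic: you cite \cite{CC95} rather than \cite{CZ61} for the differentiability step, and since the paper's $\underline{\Theta}$ coincides with the Caffarelli--Cabr\'e definition, no constant-matching is needed.
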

Here we say that $u$ is twice differentiable at $x_0 \in B_1$ if $u$ admits a second-order Taylor expansion at $x_0$.

Let us briefly describe the strategy of the proof of Theorem~\ref{thm-log-hessian-estimate}. Following the global approach of Mooney~\cite{Moo19}, we use the convex envelopes and contact sets introduced in Section~\ref{sec-preliminaries}. The geometric arguments identify the new contact points obtained by sliding paraboloids and compare the measure of the set where the paraboloids touch the convex envelope with that of the corresponding vertices. Combining these arguments with the supersolution inequality and the area formula provides the measure estimate in Lemma~\ref{lem-nonhomo-measure-est}, which quantifies the gain between consecutive contact sets in terms of $\lambda^{-1}$ and $f^+/\lambda$. This lemma is the key ingredient in the iteration argument: after controlling the boundary term, we apply it at increasing openings and use H\"older's inequality together with a weighted summation to obtain the global logarithmic Hessian estimate. In our argument, the boundary term only contributes to the constant $C$ in the estimate and does not reduce the logarithmic exponent $p/(n-1)$. Thus, the global estimate also yields the interior estimate with the same exponent; see Remark~\ref{rmk-interior-global}.

\subsection{Failure of power-type estimates}
The logarithmic character of Theorem~\ref{thm-log-hessian-estimate} cannot be replaced by any uniform power-type estimate. To be precise, let $n \geq 2$ and $\tau > p>n-1$. For every $N>1$, we construct $\lambda_N \in C(B_1)$, $u_N \in C(B_1)$, and $f_N \equiv 0$ such that
\begin{equation*}
    \|u_N\|_{L^{\infty}(B_1)} \leq 1,   \quad \|\lambda_N^{-1}\|_{L^p(B_1)} \leq C,
\end{equation*}
and
\begin{equation*}
    \mathcal{M}_{\lambda_N, 1}^-(D^2u_N) \leq 0 \quad \text{in $B_1$}
\end{equation*}
in the viscosity sense, while 
\begin{equation*}
\int_{B_{1/2}}\log^\tau(1+\|D^2u_N\|)\,\mathrm{d}x >N.
\end{equation*}
In the uniformly elliptic setting, the example in \cite[Remark~3.3]{ASS12} utilizes a radial power function to show that an admissible exponent $\varepsilon$ in the $W^{2, \varepsilon}$ estimate must converge to zero when the ellipticity ratio tends to infinity. Since our assumptions impose no pointwise upper bound on $\lambda^{-1}$, this observation already suggests that no uniform power-type estimate should be expected. In fact, we replace the power function in \cite{ASS12} by the exponential function of the form $\exp((R/r)^{\beta})$, which gives arbitrarily large $\|D^2u\|$ while keeping $\|\lambda^{-1}\|_{L^p(B_1)}$ uniformly bounded. 
A similar idea was introduced in \cite[Proposition~2.1]{Bow26}, where the failure of power-type weak Harnack estimates was demonstrated. These parallel phenomena imply that logarithmic scales arise naturally in the non-uniformly elliptic setting.

When $n=2$, the exponent in Theorem~\ref{thm-log-hessian-estimate} equals $p$, and hence the example proves its sharpness. In dimensions $n \geq 3$, the optimal exponent remains open in the range
\begin{equation*}
    \frac{p}{n-1}<\tau\leq p.
\end{equation*}

\subsection{Strong solutions}
The continuity and positivity of $\lambda$ imply uniform ellipticity on each compact subset of $B_1$. However, the estimates in this paper are independent of any pointwise lower bound for $\lambda$. It is therefore natural to ask whether the estimate continues to hold when $\lambda$ is merely measurable and may vanish on a set of measure zero. In Section~\ref{sec:strong-solution}, we establish the logarithmic Hessian estimate for $L^n$-strong supersolutions under the same integrability assumptions on $\lambda^{-1}$ and $f^+/\lambda$; see Theorem~\ref{thm-log-hessian-estimate-strong}. The global approach is useful here, since taking the minimum of a strong supersolution and a quadratic barrier need not preserve $W^{2,n}$ regularity. The key ingredient is the measure estimate in Lemma~\ref{lem-nonhomo-measure-est-strong}, which we prove by smooth approximation in $W^{2,n}$, instead of the inf-convolution argument used in Lemma~\ref{lem-nonhomo-measure-est}.

We also extend the logarithmic local maximum principle of \cite[Proposition~1.4]{Bow26} to $L^n$-strong subsolutions; see Proposition~\ref{prop-log-local-maximum-strong}. The strong solution argument in \cite[Theorem~6.1]{Bow26} assumes $s>n$ to justify the application of the area formula to the associated $W^{1,s}$ maps, which satisfy Lusin's $N$-property. The endpoint $s=n$ requires additional care, since $W^{1,n}$ regularity alone does not justify the direct application of the area formula. We overcome this difficulty in Lemmas~\ref{lem-nonhomo-measure-est-strong} and ~\ref{lem-log-measure-strong} by applying the area formula to smooth approximations and passing to the limit using the stability of the contact sets and the $L^1$ convergence of the Jacobians.

\subsection{Grushin-type operators}
For $x=(y,z)\in\mathbb{R}^{n_1}\times\mathbb{R}^{n_2}$ with $n_1,n_2\geq1$, $n=n_1+n_2$, and $\gamma\in\mathbb{R}$, we consider the \emph{Grushin-type operator}
\begin{equation*}
    G_{\gamma}u\coloneqq |y|^{\gamma}\Delta_yu+\Delta_zu.
\end{equation*}
When $\gamma<0$, the equation $G_{\gamma}u=0$ is understood in the normalized form
\begin{equation*}
    \Delta_yu+|y|^{-\gamma}\Delta_zu=0.
\end{equation*}
Grushin-type operators, including the classical model studied by Grushin~\cite{Gru70}, provide basic models of non-uniform ellipticity, with degeneracy or singularity on the set $\{y=0\}$ when $\gamma\neq0$. For regularity results for Grushin-type equations, we refer to \cite{Mal18, Mon14} for Harnack inequalities, and to \cite{KLY24, STV21a, STV21b} for Schauder estimates and asymptotic expansions in related codimension-one problems ($n_1=1$).

As an application of our logarithmic Hessian estimate, we derive local regularity for bounded $L^n$-strong solutions of $G_{\gamma}u=0$ in $B_1$ when $|\gamma|<n_1/(n(n-1))$. In this range, solutions are smooth in the $z$-variables and belong locally to $C^{2,-\gamma}$ when $\gamma<0$, and to $C^{1,\beta}$ for every $\beta\in(0,1-\gamma)$ when $\gamma>0$; see Proposition~\ref{prop-Grushin-strong}. These H\"older exponents are sharp for $\gamma<0$ and nearly sharp for $\gamma>0$, as illustrated by an explicit example in Remark~\ref{rmk-Grushin-exponent}.

Our argument does not rely on an intrinsic metric or a weighted reformulation specific to the Grushin operator. A key observation is that the coefficients of $G_{\gamma}$ are independent of the $z$-variables, and thus difference quotients in the $z$-directions still satisfy the same equation. Combining this translation invariance with Theorem~\ref{thm-log-hessian-estimate-strong} and Proposition~\ref{prop-log-local-maximum-strong}, we obtain local bounds for the second derivatives in the $z$-variables. A bootstrap argument then gives bounds for derivatives of every order in the $z$-directions. Regularity in the remaining directions follows from classical estimates for the Poisson equation.

The same regularity also holds for locally uniform limits of viscosity solutions of the regularized equations considered in Remark~\ref{rmk-Grushin-viscosity}, although it need not hold for arbitrary $C$-viscosity solutions of the Grushin equation. Moreover, the argument in the invariant directions applies to linear equations whose coefficients are independent of those variables, under suitable integrability assumptions on the ellipticity; see Remark~\ref{rmk-invariant-directions}. Similar directional estimates in the uniformly elliptic setting can be found in the partial Schauder theory of Dong--Kim~\cite{DK11, DK19}.

\subsection{Organization of the paper}
The paper is organized as follows. In Section~\ref{sec-preliminaries}, we introduce the viscosity framework, convex envelopes, and contact sets, and prove the measure estimate underlying Theorem~\ref{thm-log-hessian-estimate}. In Section~\ref{sec-main}, we prove Theorem~\ref{thm-log-hessian-estimate} and Corollary~\ref{cor-log-hessian}. Section~\ref{sec-counterexample} is devoted to the construction of the counterexample described above. In Sections~\ref{sec:strong-solution} and ~\ref{sec-application}, we extend the logarithmic Hessian estimate to strong supersolutions and apply it to study the regularity of solutions of Grushin-type equations.

\section*{Acknowledgments}
The authors thank David Bowman for bringing his recent work~\cite{Bow26} to their attention and for helpful discussions. 

S.-C. Lee was supported by the KIAS Individual Grant (No. MG099001) at the Korea Institute for Advanced Study.

\section{Preliminaries}\label{sec-preliminaries}
In this section, we introduce several notions such as viscosity solutions, convex envelopes and contact sets that will be used throughout the paper, and establish the measure estimate underlying the proof of Theorem~\ref{thm-log-hessian-estimate}. Throughout the paper, we assume that $n\geq 2$, and let $\Omega$ be a bounded convex domain in $\mathbb{R}^n$. For matrices, $\|\cdot\|$ denotes the Frobenius norm.

\begin{definition}[Elliptic fully nonlinear operator]
  Let $\lambda \in C(\Omega)$ with $0<\lambda(\cdot) \leq 1$. We say that a continuous function $F : \mathcal{S}^n \times \Omega \to \mathbb{R}$ is \emph{$(\lambda(\cdot),1)$-elliptic} if for any $M, N \in \mathcal{S}^n$ with $N \geq 0$ and $x \in \Omega$
  \begin{equation*}
      \lambda(x)\tr N \leq F(M+N, x)-F(M, x) \leq \tr N.
  \end{equation*}
\end{definition}
Pucci extremal operators $\mathcal{M}_{\lambda, 1}^{\pm}$ presented in Section~\ref{sec-introduction} are typical examples of $(\lambda(\cdot), 1)$-elliptic fully nonlinear operators.

\begin{definition}[$C$-viscosity solution]
   Let $F\in C(\mathcal{S}^n\times\Omega)$ and $f\in C(\Omega)$. A function $u \in C(\Omega)$ is a \emph{$C$-viscosity subsolution} of \(  F(D ^2 u(x), x) \geq f(x)\) in $\Omega$, when the following condition holds: if \(  x _0 \in \Omega  ,  \; \varphi \in C ^2 (\Omega) 
   \) and \(  u-\varphi  \) has a local maximum at \(  x _0   \) then 
   \begin{equation*}
      F(D ^2\varphi (x_0), x_0) \geq f(x_0).
   \end{equation*}
   In a similar way, a function $u \in C(\Omega)$ is a \emph{$C$-viscosity supersolution} of \(  F(D ^2 u(x), x) \leq f(x)\) in $\Omega$, when the following condition holds: if \(  x _0 \in \Omega  ,  \; \varphi \in C ^2 (\Omega) 
   \) and \(  u-\varphi  \) has a local minimum at \(  x _0   \) then 
   \begin{equation*}
      F(D ^2\varphi (x_0), x_0) \leq f(x_0).
   \end{equation*}
   Finally, $u$ is a \emph{$C$-viscosity solution} of $F(D^2u,x)=f$ in $\Omega$ if it is both a $C$-viscosity subsolution and a $C$-viscosity supersolution.
 \end{definition}

We recall several notions in convex analysis from \cite[Section~3]{Moo19}. For $\xi \in \mathbb{R}^n$ and $a, b \in \mathbb{R}$, we define paraboloids $P^a_{\xi, b}$ by
\begin{equation*}
    P^a_{\xi, b}(x) \coloneqq \frac{a}{2}|x|^2+\xi\cdot x+b.
\end{equation*}
We say the paraboloid $P^a_{\xi, b}$ has opening $a$. For a function $v \in C(\Omega)$, we define the \emph{$a$-convex envelope} $\Gamma_v^a$ on $\Omega$ by 
\begin{equation*}
    \Gamma_v^a(x; \Omega)=\Gamma_v^a(x) \coloneqq \sup_{\xi \in \mathbb{R}^n, \, b\in \mathbb{R}} \left\{P^{-a}_{\xi, b}(x) : P^{-a}_{\xi, b} \leq v \ \text{in} \ \Omega \right\}.
\end{equation*}
Here the supremum of the empty set is understood to be $-\infty$, and $\Gamma_v^0$ is the usual convex envelope. Moreover, we define the \emph{lower $a$-contact set} by
\begin{equation*}
    A_a(v; \Omega)=A_a(v) \coloneqq \{x \in \Omega : v(x)=\Gamma_v^a(x)\}
\end{equation*}
to be those points in $\Omega$ where $v$ has a tangent paraboloid of opening $-a$ from below in $\Omega$. The contact sets are monotone in the opening, i.e.,
\begin{equation*}
    A_a(v) \subset A_{a'}(v) \quad \text{whenever $0 \leq a \leq a'$}.
\end{equation*}
We also define
 \begin{align*}
    \underline{\Theta}_u(x; \Omega) &= \inf\{a \geq 0: x\in A_a(u; \Omega)\},\\
    \overline{\Theta}_u(x; \Omega) &=  \inf\{a \geq 0: x\in A_a(-u; \Omega)\},   \\
    \Theta_u(x; \Omega)&=\max\{\overline{\Theta}_u(x; \Omega),\underline{\Theta}_u(x; \Omega)\}.
\end{align*}
Here the infimum of the empty set is understood to be $+\infty$. We provide an equivalent definition for $\underline{\Theta}_u$ as follows (an analogous characterization holds for $\overline{\Theta}_u$). 
\begin{equation*}
    \underline{\Theta}_u(x; \Omega)
\coloneqq
\inf\left\{ a\geq 0 :
\begin{array}{l}
\text{there exists } \xi\in\mathbb{R}^n \text{ such that}\\[1mm]
u(y)\geq u(x)+\xi\cdot(y-x)-\dfrac{a}{2}|y-x|^2
\quad\text{for every }y\in \Omega
\end{array}
\right\}.
\end{equation*}

We now repeat the arguments in \cite[Section~3]{Moo19} with suitable modifications. For functions on $B_1$, we write $\Gamma_v^a=\Gamma_v^a(\cdot; B_1)$ and $A_a(v)=A_a(v; B_1)$.
\begin{lemma}\label{lem-sliding}
    Let $v\in C(B_1)$ be a $C$-viscosity supersolution of
    \begin{equation*}
        \mathcal{M}_{\lambda,1}^-(D^2v)\leq f \quad \text{in $B_1$},
    \end{equation*}
    where $f,\lambda\in C(B_1)$ and $0<\lambda\leq 1$. For $a>0$ and a measurable set $F\subset B_1\setminus A_0(v)$, take the tangent paraboloids of opening $-a$ to $\Gamma_v^0$ on $F$, and slide them up until they touch $v$ on a set $E$.
    Then 
    \begin{equation*}
        E\cap A_0(v)=\varnothing.
    \end{equation*}
\end{lemma}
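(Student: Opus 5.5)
We argue by contradiction, following Mooney's sliding argument. Fix $x_0 \in F$ and let $P$ be the tangent paraboloid of opening $-a$ to $\Gamma_v^0$ at $x_0$. Concretely, take a supporting affine function $\ell$ of $\Gamma_v^0$ at $x_0$, so that $\ell \leq \Gamma_v^0 \leq v$ in $B_1$ and $\ell(x_0)=\Gamma_v^0(x_0)$. Set $P = \ell - \frac a2|x-x_0|^2 + c$, with $c\ge 0$ the smallest constant for which $P$ touches $v$ from below in $B_1$. Let $y \in E$ be a touching point. We must show $y \notin A_0(v)$, i.e.\ $v(y) > \Gamma_v^0(y)$.

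Suppose instead that $v(y)=\Gamma_v^0(y)$. Then at $y$ we have
\begin{equation*}
  \Gamma_v^0(y)=v(y)=P(y) \quad\text{and}\quad P\le v \text{ in } B_1 .
\end{equation*}
The key step is to compare $P$ with the convex function $\Gamma_v^0$. The function $\Gamma_v^0 - P$ is convex plus a strictly convex quadratic, hence strictly convex, and it is $\ge$ the value $\Gamma_v^0-P$ computed where $\Gamma_v^0\ge \ell$. The cleanest route is as follows. Since $\Gamma_v^0$ is the largest convex function below $v$, the function $\max\{\Gamma_v^0, P\}$ is not convex in general. So instead we use the fact that $P$ touches $\Gamma_v^0$ from below at $y$ (because $P\le v$, and at contact points of $A_0$ we have that $\Gamma_v^0(y)=v(y)$). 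We also need $P\le\Gamma_v^0$ near $y$.

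To obtain this, we argue that $y\in A_0(v)$ gives a supporting affine function $\tilde\ell$ of $v$ at $y$. Then
\begin{equation*}
  P(x) - \tilde\ell(x) = \bigl(\ell-\tilde\ell\bigr)(x) + c - \tfrac a2|x-x_0|^2 .
\end{equation*}
This is concave and vanishes at $y$. Moreover, $P\le v$ near $y$ and $\tilde \ell\le v$ with equality at $y$, so the gradients must coincide: $DP(y)=D\tilde\ell$, since $v-P$ and $v-\tilde\ell$ both have minima at $y$ and $v$ is touched from below by a $C^{1}$ function there. Hence $P\le \tilde\ell$ everywhere, so $P \le \tilde\ell \le \Gamma_v^0$.

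Now evaluate at $x_0$. We get $\ell(x_0)+c - 0 = P(x_0)\le \Gamma_v^0(x_0)=\ell(x_0)$, so $c\le 0$, hence $c=0$. Then $P\le\ell$, with equality only at $x_0$. At $y$ this gives $\Gamma_v^0(y)=P(y)\le \ell(y)\le\Gamma_v^0(y)$, forcing $y=x_0$. But then $v(x_0)=\Gamma_v^0(x_0)$, i.e.\ $x_0\in A_0(v)$, contradicting $F\subset B_1\setminus A_0(v)$.

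The main subtlety is the step identifying $DP(y)$ with $D\tilde\ell$. This needs $y$ to be an interior point of $B_1$, which holds because $E\subset B_1$ is defined by touching inside $B_1$. It also needs $v$ to be touched from below at $y$ by two smooth functions, which forces equal gradients whenever the touching is two-sided in the sense of a minimum of $v-\phi$. That gradient identification is valid because both $v-P$ and $v-\tilde\ell$ attain a minimum at the interior point $y$. Since $\tilde\ell\ge P$ near $y$ with equality at $y$, the function $\tilde\ell-P$ has a local minimum at $y$, and differentiability gives the conclusion. Notice that the supersolution hypothesis is not needed for this lemma; it is purely geometric. It will be used later, when bounding $|E|$.
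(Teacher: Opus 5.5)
Your argument has a genuine gap at the gradient identification $DP(y)=D\tilde\ell$. Two smooth functions touching $v$ from below at the same interior point need not have the same gradient unless $v$ is differentiable there. If $v$ has a convex corner at $y$, then $P$ and $\tilde\ell$ can both lie below $v$, with equality at $y$, yet have different slopes. Your justification (``since $\tilde\ell\ge P$ near $y$ with equality at $y$, the function $\tilde\ell-P$ has a local minimum at $y$'') is circular. The inequality $\tilde\ell\ge P$ near $y$ is exactly what you need to prove; all you actually know is $P\le v$ and $\tilde\ell\le v$.

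For the same reason, your closing claim that the lemma is purely geometric is false. Take $v(x)=|x_1|+\phi(x_1)$ with $\phi(t)=\max\{0,\min\{Kt,K(\tfrac12-t)\}\}$, $K>a/10$, and $x_0=(\tfrac1{10},0,\dots,0)$. Then $\Gamma_v^0=|x_1|$ in a neighborhood of the segment from $0$ to $x_0$, and $x_0\notin A_0(v)$. The slid paraboloid $x_1-\tfrac a2|x-x_0|^2+\tfrac{a}{200}$ touches $v$ only at the origin, which lies in $A_0(v)$. This is also why the paper's equation-free version, Lemma~\ref{lem-sliding-strong}, only yields $|E\cap A_0(v)|=0$: there your argument runs only at points where $v$ is differentiable.

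The missing ingredient is the test-function step that the paper takes from Mooney. It is exactly where $\lambda(y)>0$ and $f(y)<\infty$ are used. Suppose $p=DP(y)\neq q=D\tilde\ell$. Set $e=(q-p)/|q-p|$, $h=x-y$, and
$\varphi(x)=v(y)+\tfrac{p+q}{2}\cdot h+\tfrac M2(e\cdot h)^2-\tfrac a2|h|^2$.
Since $\max\{P,\tilde\ell\}\ge v(y)+\tfrac{p+q}{2}\cdot h+\tfrac{|q-p|}{2}|e\cdot h|-\tfrac a2|h|^2$, you get $\varphi\le v$ for $|h|\le |q-p|/M$, with equality at $y$. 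On the other hand, $\mathcal{M}^-_{\lambda,1}(D^2\varphi(y))=\lambda(y)(M-a)-(n-1)a$, which exceeds $f(y)$ for $M$ large. This contradicts the supersolution inequality. Once $DP(y)=D\tilde\ell$ is established this way, the rest of your argument goes through as in Mooney: $P\le\tilde\ell\le\Gamma_v^0$, then $c=0$, then $y=x_0$, contradicting $x_0\in F$.
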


\begin{proof}
The proof is essentially the same as that of \cite[Lemma~3.1]{Moo19}. Suppose that $x_1\in E\cap A_0(v)$. Since $\lambda(x_1)>0$ and $f(x_1)$ is finite, the same test function argument leads to a contradiction.
\end{proof}

\begin{lemma}\label{lem-mooney}
    Let $v\in C(B_1)$ be convex. For $a>0$ and a measurable set $F\subset B_1$, let $V$ denote the set of vertices of all paraboloids of opening $-a$ touching $v$ from below in $B_1$ at points of $F$. Then $|F|\leq |V|$.
\end{lemma}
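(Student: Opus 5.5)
The plan is to show that the vertex map $T:F\to V$ has a Lipschitz left inverse whose Lipschitz constant is at most $1$; the inequality $|F|\le |V|$ then follows at once. This is Mooney's argument \cite[Lemma~3.2]{Moo19}, and the convexity of $v$ is what makes it work.

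Fix $x\in F$ and a paraboloid of opening $-a$ touching $v$ from below at $x$:
\[
P(y)=v(x)+\xi\cdot(y-x)-\tfrac a2|y-x|^2 .
\]
Its vertex is $y_0=x+\xi/a$. Since $v$ is convex and $P\le v$ with equality at $x$, the vector $\xi$ lies in the subdifferential $\partial v(x)$. Indeed, for $y$ near $x$ we have $v(y)\ge v(x)+\xi\cdot(y-x)-\tfrac a2|y-x|^2$. Applying this to $x+t(y-x)$, using the convexity inequality $v(x+t(y-x))\le (1-t)v(x)+tv(y)$, dividing by $t$ and letting $t\to0$, we get $v(y)\ge v(x)+\xi\cdot(y-x)$ for all $y\in B_1$. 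Hence every vertex $z\in V$ has the form $z=x+p/a$ with $p\in\partial v(x)$ and $x\in F$.

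Next, I would show that $x$ is uniquely determined by $z$ and that the map $z\mapsto x$ is $1$-Lipschitz. Suppose $z_i=x_i+p_i/a$ with $p_i\in\partial v(x_i)$ for $i=1,2$. Monotonicity of the subdifferential of a convex function gives $(p_1-p_2)\cdot(x_1-x_2)\ge0$. Therefore
\[
|z_1-z_2|^2=|x_1-x_2|^2+\tfrac2a(p_1-p_2)\cdot(x_1-x_2)+\tfrac1{a^2}|p_1-p_2|^2\ge|x_1-x_2|^2 .
\]
Thus the map $S:V\to F$, $z\mapsto x$, is well defined, $1$-Lipschitz and surjective onto $F$.

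Since $1$-Lipschitz maps do not increase Lebesgue outer measure (equivalently, $n$-dimensional Hausdorff measure), $|F|=|S(V)|\le|V|$, where $|V|$ is understood as outer measure if needed. The only delicate point is measurability: $V$ is the image of $F$ under a multivalued map. I would either interpret $|V|$ as outer measure, or observe that the graph of $\partial v$ restricted to $F$ is the intersection of a closed set with $F\times\mathbb R^n$, so $V$ is the projection of a measurable set and hence Lebesgue measurable (analytic when $F$ is Borel). The Lipschitz contraction step itself is elementary, so I expect no real obstacle beyond this bookkeeping.
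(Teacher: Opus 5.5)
Your proof is correct and is essentially the argument the paper imports from \cite[Lemma~3.2]{Moo19}. Convexity upgrades each tangent paraboloid to a subgradient $\xi\in\partial v(x)$ with vertex $x+\xi/a$, and monotonicity of $\partial v$ makes the vertex-to-contact-point map well defined, $1$-Lipschitz and onto $F$ (every $x\in B_1$ has nonempty $\partial v(x)$), so $|F|\le|V|$.

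One small correction to your measurability remark: for merely Lebesgue measurable $F$ the set $V$ need not be measurable. For example, $v=|x_1|$ and $F=\{0\}\times N'$ with $N'\subset\mathbb{R}^{n-1}$ non-measurable give a null set $F$ but $V=[-1/a,1/a]\times N'$. So either read $|V|$ as outer measure or take $F$ Borel; this suffices here, since the paper only applies the lemma to compact $F$.
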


\begin{proof}
    The proof is the same as that of \cite[Lemma~3.2]{Moo19}.
\end{proof}

We finally prove the measure estimate, which is a key ingredient of the proof of Theorem~\ref{thm-log-hessian-estimate}.
\begin{lemma}[Measure estimate for viscosity supersolutions]
\label{lem-nonhomo-measure-est}
    Let $u\in C(\overline{B}_1)$ be a $C$-viscosity supersolution of 
    \begin{equation*}
        \mathcal{M}_{\lambda,1}^-(D^2u)\leq f
        \quad \text{in $B_1$},
    \end{equation*}
    where $f,\lambda\in C(B_1)$ and $0<\lambda\leq 1$.
    For $a>0$ and a measurable set $F\subset B_1 \setminus A_a(u)$, take the tangent paraboloids of opening $-2a$ to $\Gamma_u^a$ on $F$, and slide them up until they touch $u$ on a set $E$. If $E\Subset B_1$, then there exists a constant $C>0$, depending only on $n$, such that
    \begin{equation*}
        |F|\leq C\int_{A_{2a}(u)\setminus A_a(u)}
        \left[\lambda^{-(n-1)}+\lambda^{-(n-1)}\frac{f^+}{a\lambda}+\left(\frac{f^+}{a\lambda}\right)^n\right]\,\mathrm{d}x.
    \end{equation*}
\end{lemma}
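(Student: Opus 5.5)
The plan is to follow Mooney's argument~\cite[Section~3]{Moo19}: reduce to the convex case by adding a quadratic, bound $|F|$ by the measure of the set of vertices, and then bound the latter through the area formula applied to the vertex map on $E$. The sharp power $\lambda^{-(n-1)}$ will come from a case split on the sign of the eigenvalues of $D^2u$ on $E$.

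\emph{Step 1 (reduction and location of $E$).} Set $w(x)\coloneqq u(x)+\frac a2|x|^2$. Then $\Gamma^0_w=\Gamma^a_u+\frac a2|x|^2$, so $A_0(w)=A_a(u)$. Paraboloids of opening $-2a$ for $u$ become paraboloids of opening $-a$ for $w$. By the superadditivity $\mathcal M^-_{\lambda,1}(M)\le \mathcal M^-_{\lambda,1}(M-aI)+\mathcal M^+_{\lambda,1}(aI)$, the function $w$ is a $C$-viscosity supersolution of $\mathcal M^-_{\lambda,1}(D^2w)\le f+na$, with continuous right-hand side. Lemma~\ref{lem-sliding} applied to $w$ gives $E\cap A_a(u)=\varnothing$. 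By construction every point of $E$ is touched from below by a paraboloid of opening $-2a$, so $E\subset A_{2a}(u)\setminus A_a(u)$. Lemma~\ref{lem-mooney}, applied to the convex function $\Gamma^0_w$ with opening $a$, gives $|F|\le|V|$. Here $V$ is the set of vertices of the paraboloids touching $\Gamma^0_w$ on $F$, and these coincide with the vertices of the slid paraboloids touching $u$ on $E$. So it remains to estimate $|V|$.

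\emph{Step 2 (vertex map and area formula).} At a point $x\in E$, write the touching paraboloid as $-a|\cdot-y|^2+c$ with vertex $y$. Matching gradients gives $y=T(x)\coloneqq x+\frac{Du(x)}{2a}$. Hence $V=T(E)$ and formally
\[
|V|\le\int_E\det\Bigl(I+\tfrac{D^2u}{2a}\Bigr)\,\mathrm dx ,\qquad I+\tfrac{D^2u}{2a}\ge0 \text{ on } E .
\]
I expect this to be the main obstacle, since $u$ is only continuous. I would justify it with the inf-convolution. Since $E\Subset B_1$, on a neighborhood of $E$ the inf-convolutions $u_\varepsilon$ are semiconcave and are supersolutions with $\lambda,f$ replaced by $\lambda_\varepsilon,f_\varepsilon$ that converge locally uniformly, by continuity. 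Being also touched from below by paraboloids on the corresponding contact sets, $u_\varepsilon$ is $C^{1,1}$ there, so the area formula applies to $T_\varepsilon$. The contact sets and vertex sets are stable as $\varepsilon\to0$ (via an upper semicontinuity of the measures of compact contact sets). This stability lets me pass to the limit and obtain the displayed inequality. It also yields the pointwise eigenvalue inequalities below at a.e.\ point of $E$, where $u$ is twice differentiable.

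\emph{Step 3 (pointwise algebra).} Let $e_i$ be the eigenvalues of $D^2u(x)$ at such a point, so $e_i\ge-2a$. The supersolution inequality gives $\lambda\sum_{e_i\ge0}e_i\le f^++\sum_{e_i<0}|e_i|\le f^++2an$.

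If all $e_i\ge0$, then $\lambda\operatorname{tr}D^2u\le f^+$. By AM--GM, $\det(I+D^2u/2a)\le C\bigl(1+(f^+/(a\lambda))^n\bigr)$.

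If some $e_i<0$, the corresponding factors are at most $1$ and at most $n-1$ eigenvalues are nonnegative. Then
\[
\det\Bigl(I+\tfrac{D^2u}{2a}\Bigr)\le\Bigl(1+\tfrac{C}{\lambda}+\tfrac{Cf^+}{a\lambda}\Bigr)^{n-1}.
\]
Expanding, each term has the form $\lambda^{-(n-1-k)}t^k$ with $t=f^+/(a\lambda)$ and $0\le k\le n-1$. Such a term is bounded by $\lambda^{-(n-1)}$ if $k=0$. For $k\ge1$, it is bounded by $\lambda^{-(n-1)}t$ when $t\le\lambda^{-1}$, and by $t^{n-1}\le 1+t^n$ when $t\ge\lambda^{-1}$.

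Integrating over $E\subset A_{2a}(u)\setminus A_a(u)$ gives the claimed bound with $C=C(n)$.
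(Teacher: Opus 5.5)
Your Steps~1 and~3 are sound and are essentially the paper's argument. The paper normalizes with $v=u/a+|x|^2/2=w/a$ and uses Lemma~\ref{lem-sliding} and Lemma~\ref{lem-mooney} exactly as you do. Its determinant bound, which uses that the smallest eigenvalue of $D^2u$ is at most $f^+/(n\lambda)$, is your sign split in another form.

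There is one harmless bookkeeping slip. The vertices of the opening-$(-2a)$ paraboloids for $u$ are half those of the opening-$(-a)$ paraboloids for $w$. So $T(E)=\tfrac12 V$ rather than $V$, which only costs a factor $2^n$.

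The genuine gap is the limit in Step~2. From the inf-convolutions you get $|V|\lesssim\int_{E_\varepsilon}\det(I+D^2u_\varepsilon/(2a))\,\mathrm{d}x$. You then assert that stability of the contact sets yields $|V|\lesssim\int_E\det(I+D^2u/(2a))\,\mathrm{d}x$, to be estimated at a.e.\ point of $E$ where $u$ is twice differentiable. Nothing justifies this:
\begin{itemize}
\item Uniform convergence $u_\varepsilon\to u$ gives no control of $D^2u_\varepsilon$ in the limit. The contact points of $u_\varepsilon$ are not even the points of $E$.
\item For a merely continuous supersolution you only know that $u$ is touched from below on $E$. That does not give twice differentiability a.e.\ on $E$, so the limiting integrand is not available.
\end{itemize}

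The fix, which is what the paper does, is to reverse the order and run your Step~3 algebra on $u_\varepsilon$ itself. It is semiconcave, so Alexandrov's theorem applies. It satisfies $\mathcal{M}^-_{\lambda_\varepsilon,1}(D^2u_\varepsilon)\le f_\varepsilon$ near $E$, with $\lambda_\varepsilon(x)=\min_{\overline B_{R_\varepsilon}(x)}\lambda$ and $f_\varepsilon(x)=\max_{\overline B_{R_\varepsilon}(x)}f^+$. This gives $|V|\le C\int_{E_\varepsilon}G(\lambda_\varepsilon,f_\varepsilon)\,\mathrm{d}x$, where $G$ is the integrand in the statement and depends only on continuous data.

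Now keep $V$ fixed and take $F$ compact, so that $E$ is compact. Then $E_\varepsilon\subset E^\delta\Subset B_1$ for small $\varepsilon$. On $\overline{E^\delta}$, $\lambda$ is bounded below, and $\lambda_\varepsilon\to\lambda$, $f_\varepsilon\to f^+$ uniformly there. So you can let $\varepsilon\to0$ and then $\delta\to0$, and second derivatives of $u$ never enter.
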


\begin{proof}
   We essentially follow the proof of \cite[Lemma~3.3]{Moo19}, with some modifications to deal with the non-uniform ellipticity and the right-hand side. By inner approximation, it suffices to prove the estimate when $F$ is compact. Let 
    \begin{equation*}
        v=\frac{1}{a}u+\frac{1}{2}|x|^2.
    \end{equation*}
    Then the arguments in \cite[Section~3]{Moo19} show that $A_a(u)=A_0(v)$, $A_{2a}(u)=A_1(v)$, and that the set $E$ is obtained by taking the paraboloids of opening $-1$ which are tangent from below to $\Gamma_v^0$ on $F$, and sliding them up until they touch $v$. Let $V$ denote the set of vertices of these paraboloids. We remark that both $V$ and $E$ are compact.

    Since
    \begin{equation}\label{nonhomo-eq-pucci}
        \mathcal{M}_{\lambda, 1}^-(D^2v) \leq \mathcal{M}_{\lambda, 1}^+(I)+\frac{1}{a}\mathcal{M}_{\lambda, 1}^-(D^2u) \leq n+\frac{f^+}{a},
    \end{equation}
    we apply Lemma~\ref{lem-sliding} for $v$ to find that $E  \cap A_0(v)=\varnothing$. The condition $E \Subset B_1$ and the definition of $E$ yield that 
    \begin{equation*}
        E  \subset A_1(v) \setminus A_0(v)=A_{2a}(u) \setminus A_a(u).
    \end{equation*}
    
    Assume for the moment that $u$ is semi-concave. Let $x\in E$ with corresponding vertex $y\in V$. Then the function $z\mapsto v(z)+|z-y|^2/2$ attains its minimum on $\overline{B}_1$ at $x$. Since $v$ is semi-concave and has a supporting paraboloid from below at $x$, $v$ is differentiable at $x$ and so $y=x+\nabla v(x)$. By Alexandrov's theorem, $v$ is twice differentiable a.e., and the minimum property gives $I+D^2v(x)\geq 0$ a.e. on $E$. Since the map $x \mapsto y=x+\nabla v(x)$ is Lipschitz on $E$ and maps $E$ onto $V$, the area formula gives
    \begin{equation*}
      |V| \leq \int_E \det (I+D^2v)\,\mathrm{d}x.
    \end{equation*}
    We next provide the eigenvalue estimates. At a point $x \in E$ where $v$ is twice differentiable, \eqref{nonhomo-eq-pucci} yields
\begin{equation*}
    \mathcal{M}_{\lambda,1}^-(D^2v(x))
    \leq n+\frac{f^+(x)}{a}.
\end{equation*}
Since every negative eigenvalue of $D^2v(x)$ is at least $-1$,
any positive eigenvalue $\mu$ satisfies
\begin{equation*}
    \lambda(x)\mu-(n-1)
    \leq \mathcal{M}_{\lambda,1}^-(D^2v(x))
    \leq n+\frac{f^+(x)}{a},
\end{equation*}
which gives
\begin{equation*}
    -I\leq D^2v(x)
    \leq \frac{2n-1+f^+(x)/a}{\lambda(x)}I.
\end{equation*}
Note also that the smallest eigenvalue of $D^2u(x)$ is at most $f^{+}(x)/(n\lambda(x))$, since $\mathcal{M}^-_{\lambda, 1}(D^2u)\leq f^{+}$. By scaling back, the smallest eigenvalue of $D^2v(x)$ is at most $1+f^+(x)/(an\lambda(x))$. Therefore, we conclude that there exists a constant $C>0$, depending only on $n$, such that a.e. on $E$, 
    \begin{equation*}
        \begin{aligned}
            \det D_xy=\det (I+D^2v) &\leq \left(2+  \frac{f^+}{an\lambda}\right)\left(1+ \frac{(2n-1) + f^+/a  }{\lambda} \right)^{n-1} \\
            &\leq C \left(1+\frac{f^+}{a\lambda}\right)\left(\frac{1}{\lambda}+ \frac{f^+}{a\lambda} \right)^{n-1}\\
            &\leq C\left[\lambda^{-(n-1)}+\lambda^{-(n-1)}\frac{f^+}{a \lambda}+\left(\frac{f^+}{a\lambda}\right)^{n}\right],
        \end{aligned}
    \end{equation*}
    and so
    \begin{equation*}
        |V| \leq C\int_E \left[\lambda^{-(n-1)}+\lambda^{-(n-1)}\frac{f^+}{a \lambda}+\left(\frac{f^+}{a\lambda}\right)^{n}\right] \,\mathrm{d}x.
    \end{equation*}
    By using Lemma~\ref{lem-mooney} and the inclusion $E \subset A_{2a}(u) \setminus A_a(u)$, we obtain the desired estimate when $u$ is semi-concave.
    \medskip
    
We now remove the semi-concavity assumption by employing the standard inf-convolution argument. Fix $F$ and the vertex set $V$ from $u$. For $k \geq 1$, we define the inf-convolution
\begin{equation*}
    u_k(x) \coloneqq \inf_{y \in \overline B_1} \left\{ u(y)+\frac{k}{2}|x-y|^2\right\} \quad \text{for $x \in \overline B_1$}.
\end{equation*}
Then $u_k$ is semi-concave and $u_k \to u$ uniformly on $\overline{B}_1$ as $k \to \infty$. Moreover, for $R_k \coloneqq (2\osc_{B_1}u/k)^{1/2}$, \cite[Lemma~A.3]{CCKS96} implies that $u_k$ satisfies
\begin{equation*}
    \mathcal{M}_{\lambda_k, 1}^-(D^2u_k) \leq f_k \quad \text{in $B_{1-R_k}$},
\end{equation*}
where
\begin{equation*}
    \lambda_k(x) \coloneqq \min_{\overline{B}_{R_k}(x)}\lambda \quad \text{and} \quad f_k(x) \coloneqq \max_{\overline{B}_{R_k}(x)}f^+.
\end{equation*}
We let $v_k=u_k/a+|x|^2/2$ and slide the paraboloids of opening $-1$ with vertices in $V$ from below $v_k$ until they touch on a set $E_k$.
For every sufficiently small $\delta>0$, let
\begin{equation*}
    E^\delta\coloneqq
    \{x\in B_1:\dist(x,E)<\delta\}\Subset B_1.
\end{equation*}
Uniform convergence and compactness imply that
$E_k\subset E^\delta$ for all sufficiently large $k$;
see also \cite[Lemma~A.1]{CCKS96}.
Since $R_k\to 0$, we also have $E^\delta\Subset B_{1-R_k}$
for all sufficiently large $k$.
Applying the preceding semi-concave argument to $u_k$, we obtain
\begin{equation*}
    |V|\leq C\int_{E_k}
    \left[
        \lambda_k^{-(n-1)}
        +\lambda_k^{-(n-1)}\frac{f_k}{a\lambda_k}
        +\left(\frac{f_k}{a\lambda_k}\right)^n
    \right]\,\mathrm{d}x.
\end{equation*}
Since $\lambda_k\to\lambda$ and $f_k\to f^+$ uniformly on
$\overline{E^\delta}$ as $k \to\infty$, we have
\begin{equation*}
    \begin{aligned}
         |V| &\leq C\limsup_{k \to \infty}\int_{E^{\delta}}\left[\lambda_k^{-(n-1)}+\lambda_k^{-(n-1)}\frac{f_k}{a \lambda_k}+\left(\frac{f_k}{a\lambda_k}\right)^{n}\right]\,\mathrm{d}x \\
         &=C\int_{E^{\delta}} \left[\lambda^{-(n-1)}+\lambda^{-(n-1)}\frac{f^+}{a \lambda}+\left(\frac{f^+}{a\lambda}\right)^{n}\right]\,\mathrm{d}x.
    \end{aligned}
\end{equation*}
Letting $\delta\to 0^+$ by the dominated convergence theorem
and arguing as in the semi-concave case, we finish the proof.
\end{proof}
  
\section{Logarithmic Hessian estimates}\label{sec-main}
In this section, we prove Theorem~\ref{thm-log-hessian-estimate} and Corollary~\ref{cor-log-hessian}. We essentially employ the argument of \cite[Section~5]{Moo19} to keep the new contact points inside the domain. We first assume that $u\in C(\overline{B}_1)$, and then pass to the limit on smaller balls to remove this assumption.

\begin{proof}[Proof of Theorem~\ref{thm-log-hessian-estimate}]
(\emph{Step 1}. $u\in C(\overline{B}_1)$.) We first prove the estimate under the normalized assumptions
\begin{equation}\label{eq-global-normalization}
    0\leq u\leq 2^{-8n} \quad \text{and} \quad \left\|\frac{f^+}{\lambda}\right\|_{L^{\sigma}(B_1)}\leq 1.
\end{equation}
Throughout Step 1, we write $A_a(u)=A_a(u;B_1)$. For $k\geq 0$, we set
\begin{equation*}
    D_k\coloneqq A_{2^{k+1}}(u)\setminus A_{2^k}(u), \quad \rho_k\coloneqq 2^{-4n-k/2}, \quad F_k\coloneqq B_{1-4\rho_k}\setminus A_{2^k}(u).
\end{equation*}
We take the paraboloids of opening $-2^{k+1}$ which are tangent from below to $\Gamma_u^{2^k}$ on $F_k$, and slide them up until they touch $u$ on a set $E_k$. We claim that
\begin{equation}\label{eq-global-interior-contact}
    E_k\subset B_{1-2\rho_k}\Subset B_1.
\end{equation}
Indeed, we have
\begin{equation*}
    0\leq\Gamma_u^{2^k}\leq u\leq 2^{-8n}=2^k\rho_k^2.
\end{equation*}
Let $P$ be one of the above paraboloids touching $\Gamma_u^{2^k}$ from below at $x_0\in F_k$, and let $y$ be its vertex, i.e., $P(x)=P(y)-2^k|x-y|^2$. If $y\neq x_0$, then take
\begin{equation*}
    z=x_0+\rho_k\frac{y-x_0}{|y-x_0|}\in B_1.
\end{equation*}
Since $P(x_0)=\Gamma_u^{2^k}(x_0)\geq 0$ and $P\leq\Gamma_u^{2^k}\leq 2^k\rho_k^2$, we obtain
\begin{equation*}
    2^k\big(2\rho_k|y-x_0|-\rho_k^2\big)
    =P(z)-P(x_0)\leq 2^k\rho_k^2.
\end{equation*}
Thus, it turns out that $|y-x_0|\leq\rho_k$, which also holds when $y=x_0$, and hence $y\in B_{1-3\rho_k}$. If $P+t$ touches $u$ at $x_1\in\overline{B}_1$, then
\begin{equation*}
    0\leq u(x_1)=P(x_1)+t=P(y)+t-2^k|x_1-y|^2
    \leq 2^k\rho_k^2-2^k|x_1-y|^2.
\end{equation*}
It follows that $|x_1-y|\leq\rho_k$, which finishes the proof of \eqref{eq-global-interior-contact}.

We now apply Lemma~\ref{lem-nonhomo-measure-est} with $a=2^k$, $F=F_k$ and $E=E_k$ to find that
\begin{equation}\label{eq-global-measure}
    \begin{aligned}
        |F_k|&\leq C\int_{D_k}\lambda^{-(n-1)}\,\mathrm{d}x
        +C2^{-k}\int_{D_k}\lambda^{-(n-1)}\frac{f^+}{\lambda}\,\mathrm{d}x
        +C2^{-nk}\int_{D_k}\left(\frac{f^+}{\lambda}\right)^n\,\mathrm{d}x\\
        &\leq C\int_{D_k}\lambda^{-(n-1)}\,\mathrm{d}x
        +C2^{-k}(1+\|\lambda^{-1}\|_{L^p(B_1)}^p).
    \end{aligned}
\end{equation}
Here we used Young's inequality with exponents $p/(n-1)$ and $p/(p-n+1)$, together with \eqref{eq-global-normalization} and the choice of $\sigma$.

There are two cases to consider. If $|F_k|\geq |B_1 \setminus A_{2^k}(u)|/2$, then \eqref{eq-global-measure} directly controls $|B_1 \setminus A_{2^k}(u)|$. Otherwise, we have
\begin{equation*}
    |B_1 \setminus A_{2^k}(u)|\leq 2|(B_1 \setminus A_{2^k}(u))\setminus F_k|
    \leq 2|B_1\setminus B_{1-4\rho_k}|\leq C(n)2^{-k/2}.
\end{equation*}
Therefore, in either case,
\begin{equation}\label{eq-global-recurrence}
    |B_1 \setminus A_{2^k}(u)|\leq C\int_{D_k}\lambda^{-(n-1)}\,\mathrm{d}x
    +C2^{-k}(1+\|\lambda^{-1}\|_{L^p(B_1)}^p)+C2^{-k/2}.
\end{equation}
For
\begin{equation*}
    \tau\coloneqq\frac{p}{n-1}>1 \quad \text{and} \quad
    S_N\coloneqq\sum_{k=0}^N(k+1)^{\tau-1}|B_1 \setminus A_{2^k}(u)|,
\end{equation*}
we use H\"older's inequality and the disjointness of $D_k$ to obtain that
\begin{equation}\label{eq-global-summation}
    \begin{aligned}
        S_N&\leq C\sum_{k=0}^N(k+1)^{\tau-1}\int_{D_k}\lambda^{-(n-1)}\,\mathrm{d}x
        +C(1+\|\lambda^{-1}\|_{L^p(B_1)}^p)\\
        &\leq C\|\lambda^{-1}\|_{L^p(B_1)}^{n-1}
        \left(\sum_{k=0}^N(k+1)^\tau|D_k|\right)^{(\tau-1)/\tau}
        +C(1+\|\lambda^{-1}\|_{L^p(B_1)}^p).
    \end{aligned}
\end{equation}
Moreover, by rearranging the sum, we obtain
\begin{equation*}
    \begin{aligned}
        \sum_{k=0}^N(k+1)^\tau|D_k|
        &=|B_1 \setminus A_{1}(u)|+\sum_{k=1}^N\big((k+1)^\tau-k^\tau\big)|B_1 \setminus A_{2^k}(u)|-(N+1)^\tau|B_1 \setminus A_{2^{N+1}}(u)|\\
        &\leq \tau\sum_{k=0}^N(k+1)^{\tau-1}|B_1 \setminus A_{2^k}(u)|=\tau S_N.
    \end{aligned}
\end{equation*}
Combining this with \eqref{eq-global-summation} and using Young's inequality, we conclude that
\begin{equation*}
    S_N\leq \frac{1}{2}S_N+C(1+\|\lambda^{-1}\|_{L^p(B_1)}^p).
\end{equation*}
In particular, by letting $N\to\infty$, we obtain
\begin{equation*}
    \sum_{k=0}^{\infty}(k+1)^{\tau-1}|B_1 \setminus A_{2^k}(u)|
    \leq C(1+\|\lambda^{-1}\|_{L^p(B_1)}^p).
\end{equation*}
Since $\{\underline{\Theta}_u(\cdot;B_1)>2^k\}\subset B_1 \setminus A_{2^k}(u)$, we apply Tonelli's theorem, a dyadic decomposition, and the mean value theorem to find that
\begin{equation}\label{eq-global-normalized-estimate}
    \begin{aligned}
        \int_{B_1}\log^\tau(1+\underline{\Theta}_u(x;B_1))\,\mathrm{d}x
        &=\tau\int_0^{\infty}\frac{\log^{\tau-1}(1+t)}{1+t}
        |\{x\in B_1:\underline{\Theta}_u(x;B_1)>t\}|\,\mathrm{d}t\\
        &\leq C|B_1|+\tau\sum_{k=0}^{\infty}
        \int_{2^k}^{2^{k+1}}\frac{\log^{\tau-1}(1+t)}{1+t}
        |B_1\setminus A_{2^k}(u)|\,\mathrm{d}t\\
        &\leq C|B_1|+C\sum_{k=0}^{\infty}(k+1)^{\tau-1}
        |B_1\setminus A_{2^k}(u)|\\
        &\leq C(1+\|\lambda^{-1}\|_{L^p(B_1)}^p).
    \end{aligned}
\end{equation}
This proves the estimate under the normalized assumptions.

For the general case, we set
\begin{equation*}
    K\coloneqq 1+\|u\|_{L^{\infty}(B_1)}+\left\|\frac{f^+}{\lambda}\right\|_{L^{\sigma}(B_1)}
\end{equation*}
and define
\begin{equation*}
    v=\frac{2^{-8n-1}}{K}\left(u+\|u\|_{L^{\infty}(B_1)}\right).
\end{equation*}
Then $v$ satisfies $\mathcal{M}_{\lambda, 1}^-(D^2v) \leq \tilde f=2^{-8n-1}f/K$. In particular, $v$ satisfies the normalization \eqref{eq-global-normalization}, and
\begin{equation*}
    \underline{\Theta}_v(x;B_1)=2^{-8n-1}\,\underline{\Theta}_u(x;B_1)/K.
\end{equation*}
Since $2^{8n+1}K >1$, we have
\begin{equation*}
    \log(1+t)\leq\log(2^{8n+1}K)+\log(1+2^{-8n-1}t/K) \quad \text{for $t\geq 0$}.
\end{equation*}
Applying \eqref{eq-global-normalized-estimate} to $v$ proves the estimate when $u\in C(\overline{B}_1)$.

\medskip
(Step 2. $u\in C(B_1)\cap L^{\infty}(B_1)$.) For $r \in (1/2, 1)$, let $u_r(x)=u(rx)$, $\lambda_r(x)=\lambda(rx)$ and $f_r(x)=r^2f(rx)$. Then $u_r\in C(\overline{B}_1)$ satisfies $\mathcal{M}_{\lambda_r,1}^-(D^2u_r)\leq f_r$ in $B_1$, and
\begin{equation*}
    \underline{\Theta}_{u_r}(x;B_1)
    =r^2\underline{\Theta}_u(rx;B_r).
\end{equation*}
Applying Step 1 to $u_r$ and scaling back, we obtain
\begin{equation*}
    \begin{aligned}
        \int_{B_r}\log^\tau(1+r^2\underline{\Theta}_u(x;B_r))\,\mathrm{d}x
        &\leq C(1+\|\lambda^{-1}\|_{L^p(B_1)}^p)\\
        &\qquad+C\log^\tau\left(1+\|u\|_{L^{\infty}(B_1)}
        +\left\|\frac{f^+}{\lambda}\right\|_{L^{\sigma}(B_1)}\right),
    \end{aligned}
\end{equation*}
where $C$ is independent of $r$. Letting $r \to 1^-$, we obtain the desired estimate by the monotone convergence theorem, since $\underline{\Theta}_u(x;B_r)\nearrow\underline{\Theta}_u(x;B_1)$ for every $x\in B_1$.
\end{proof}

\begin{remark}[Interior and global estimates]\label{rmk-interior-global}
We point out that Mooney~\cite[Theorems~2.1 and~2.4]{Moo19} proves the interior and global estimates separately. In his notation, with ellipticity ratio $\Lambda\geq 1$, the respective exponents in the measure decay estimates are
\begin{equation*}
    \varepsilon_{\mathrm{int}}=2^{-2}(2n\Lambda)^{1-n}
    \quad \text{and} \quad
    \varepsilon_{\mathrm{glob}}=2^{-3}(2n\Lambda)^{1-n}.
\end{equation*}
Thus, the interior argument gives a better exponent than the global one.

In our setting, the boundary layer term in \eqref{eq-global-recurrence} is summable with the weights used in \eqref{eq-global-summation}. More precisely, for $\tau=p/(n-1)$, we have
\begin{equation*}
    \sum_{k=0}^{\infty}(k+1)^{\tau-1}2^{-k/2}<\infty.
\end{equation*}
Consequently, this term only contributes to the constant and does not reduce the logarithmic exponent $p/(n-1)$. The global estimate therefore yields the interior estimate with the same exponent by restricting the integral to $B_{1/2}$. For this reason, a separate interior argument is not necessary for Theorem~\ref{thm-log-hessian-estimate}.
\end{remark}

\begin{proof}[Proof of Corollary~\ref{cor-log-hessian}]
    By applying Theorem~\ref{thm-log-hessian-estimate} to $u$ and $-u$, we obtain the estimates for $\underline{\Theta}_u(\cdot;B_1)$ and $\underline{\Theta}_{-u}(\cdot;B_1)=\overline{\Theta}_u(\cdot;B_1)$. In particular, $\Theta_u=\max\{\overline{\Theta}_u,\underline{\Theta}_u\}<\infty$ a.e. in $B_1$. Thus, the generalized Rademacher--Stepanov theorem (see \cite{CZ61} for instance) implies that $u$ is twice differentiable a.e. in $B_1$, and
    \begin{equation*}
        -\underline{\Theta}_u(x;B_1)I\leq D^2u(x)\leq\overline{\Theta}_u(x;B_1)I.
    \end{equation*}
    The desired Hessian estimate on $B_1$ follows by combining the two contact estimates.
\end{proof}

\section{A counterexample}\label{sec-counterexample}
\begin{proposition}
Let $n\geq 2$, $p>n-1$, fix $\tau>p$, and set $\sigma=\max\{p/(p-n+1), n\}$. Then no uniform logarithmic Hessian estimate with exponent $\tau$ can hold under the assumptions $\lambda^{-1} \in L^p(B_1)$ and $f/\lambda \in L^{\sigma}(B_1)$. More precisely, for every $N>1$, there exist functions
\begin{itemize}
    \item $\lambda_N \in C(B_1)$ with $0<\lambda_N \leq 1$,
    \item $u_N \in C(B_1)$,
    \item $f_N \equiv 0$,
\end{itemize}
such that
\begin{equation*}
    \|u_N\|_{L^{\infty}(B_1)} \leq 1, \quad \|\lambda_N^{-1}\|_{L^p(B_1)} \leq C, \quad \|f_N/\lambda_N\|_{L^{\sigma}(B_1)}=0,
\end{equation*}
where $C>0$ depends only on $n$, $p$ and $\tau$, and $u_N$ is a viscosity supersolution of
\begin{equation*}
    \mathcal{M}_{\lambda_N, 1}^{-}(D^2u_N) \leq 0 \quad \text{in $B_1$},
\end{equation*}
while
\begin{equation*}
   \int_{B_{1/2}}\log^\tau(1+\underline{\Theta}_{u_N}(x; B_1))\,\mathrm{d}x >N.
\end{equation*}
The same construction also satisfies
\begin{equation*}
    \int_{B_{1/2}}\log^\tau(1+\|D^2u_N(x)\|)\,\mathrm{d}x >N.
\end{equation*}
\end{proposition}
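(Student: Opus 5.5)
The plan is to build radial examples $u_N(x)=g_N(|x|)$ whose radial second derivative grows like a double exponential in $1/r$, while $\lambda_N^{-1}$ grows only like a power $r^{-\beta}$. Choose $\beta$ with
\[
\beta p<n<\beta\tau ,
\]
which is possible exactly because $\tau>p$. The condition $\beta p<n$ keeps $\|\lambda_N^{-1}\|_{L^p(B_1)}$ bounded. The condition $\beta\tau>n$ makes $\log^\tau$ of the Hessian, which is about $r^{-\beta\tau}$, non-integrable at the origin. A truncation at scale $\varepsilon_N\to0$ then produces integrals larger than any $N$. For $n=2$ this is the sharpness claim, since the exponent in Theorem~\ref{thm-log-hessian-estimate} is $p/(n-1)=p$.

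\emph{Step 1 (pointwise supersolution condition).} Write $g'=-\psi$ with $\psi>0$ decreasing. The eigenvalues of $D^2u$ are $g''=-\psi'>0$ in the radial direction and $-\psi/r<0$ with multiplicity $n-1$. Hence $\mathcal{M}^-_{\lambda,1}(D^2u)=\lambda|\psi'|-(n-1)\psi/r$, and this is $\le0$ as soon as
\[
\lambda(r)\le \frac{(n-1)\psi}{r|\psi'|}.
\]
Take $\psi(r)=c\exp((R/r)^\beta)$. Then $|\psi'|/\psi=\beta R^\beta r^{-\beta-1}$, so we may set
\[
\lambda(r)=\min\Bigl\{1,\tfrac{n-1}{\beta}(r/R)^\beta\Bigr\}.
\]
This $\lambda$ does not depend on the normalizing constant $c$. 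It gives $\|\lambda^{-1}\|_{L^p(B_1)}^p\lesssim 1+R^n$, uniformly in $N$, because $\beta p<n$.

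\emph{Step 2 (truncation and viscosity sense).} To keep $u$ bounded and continuous, we stop the profile at $r=\varepsilon_N$. On $B_{\varepsilon_N}$ we replace it by a function that is itself a supersolution, for instance a constant, or a concave cap $\alpha-\kappa|x|^2$. The gluing should produce only a concave kink, where $g'$ jumps downward. No $C^2$ function touches from below at such a kink, so the viscosity inequality holds there trivially. Equivalently, we write $u_N$ as a minimum of supersolutions. Smooth points are handled by Step 1, and $f_N\equiv0$. We also make $\lambda_N$ continuous and positive by freezing it at the value $\lambda(\varepsilon_N)$ on $B_{\varepsilon_N}$.

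\emph{Step 3 (normalization and lower bound), the main obstacle.} The difficulty is reconciling $\|u_N\|_{L^\infty}\le1$ with a large Hessian. With $c=1$, the oscillation $\int_{\varepsilon_N}^1\psi$ is about $\exp((R/\varepsilon_N)^\beta)$. Dividing by it subtracts $(R/\varepsilon_N)^\beta$ from $\log\|D^2u\|\approx(R/r)^\beta$. This leaves a gain only where $(R/r)^\beta$ is much larger than $(R/\varepsilon_N)^\beta$, that is, deep inside the region that was cut off.

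I would handle this by placing the exponential blow-up on the \emph{inside} of the support and choosing $c=c_N=\exp(-(R/\varepsilon_N)^\beta)\cdot(\text{small})$. The order of the two radii must be arranged so that the total variation of $u$ comes from a shell of width about $\varepsilon_N^{1+\beta}$, where $\psi$ is of order one. At the same time, the second derivative $|\psi'|=\psi\,\beta R^\beta r^{-\beta-1}$ should stay at least $\exp(\tfrac12 (R/r)^\beta)$ on a set $\{\delta_N<r<1/2\}$ carrying $\int r^{-\beta\tau}\,dx\gtrsim\delta_N^{\,n-\beta\tau}\to\infty$.

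If this ordering cannot be made consistent, my fallback is to superpose countably many rescaled copies. These are minima of radial bumps centred at disjoint points of $B_{1/2}$, each of amplitude $2^{-j}$, so that $\|u_N\|_{L^\infty}\le1$. The $\lambda$-costs add up summably, since each contributes $\lesssim R_j^n$. The $\log^\tau$-integrals add up divergently, since each contributes about $R_j^{\beta\tau}\delta_j^{\,n-\beta\tau}$ and $\delta_j$ can be taken as small as we like. Stopping the sum at finitely many terms gives the bound $>N$.

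Finally, $\underline{\Theta}_{u_N}(x;B_1)\ge$ the positive eigenvalue $|\psi'|$ at points of twice differentiability, up to a constant. This yields both stated lower bounds simultaneously, the one for $\underline{\Theta}_{u_N}$ and the one for $\|D^2u_N\|$.
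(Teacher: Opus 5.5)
Your Steps 1 and 2 are fine; the inner cap is a concave kink, as in the paper. The gap is Step 3, which you rightly call the crux: it is not resolved, and as stated neither route can resolve it.

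\emph{A single radial profile fails for structural reasons.} Suppose $g'=-\psi$ with $\psi>0$ decreasing and $\|u\|_{L^\infty(B_1)}\leq 1$. Then $r\psi(2r)\leq\int_r^{2r}\psi\leq 2$, so $\psi(s)\leq 4/s$ for $2\varepsilon_N\leq s<1$. Your own supersolution inequality then gives $|\psi'|\leq (n-1)\psi/(\lambda r)\leq 4(n-1)/(\lambda r^2)$. Hence $\log(1+\|D^2u\|)\leq C\bigl(1+\log(1/r)+\log(1/\lambda)\bigr)$, and the integral of its $\tau$-th power over $B_{1/2}$ is at most $C(1+\|\lambda^{-1}\|_{L^p(B_1)}^p)$. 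The layer $\varepsilon_N<r<2\varepsilon_N$, where $\psi(r)\leq 2/(r-\varepsilon_N)$, is handled the same way. So no choice of $c_N,\varepsilon_N,\delta_N$ gives $|\psi'|\geq\exp(\tfrac12(R/r)^\beta)$ on $\{\delta_N<r<1/2\}$ with $\delta_N\to0$: the normalization always cancels the exponential.

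\emph{The fallback's key claim is false.} You assert that $\delta_j$ ``can be taken as small as we like.'' Take a bump of radius $R_j$ and amplitude $A_j\leq 1$, with your $\lambda\approx(r/R_j)^\beta$. The same argument gives $|\psi'|\lesssim A_jR_j^{-2}(R_j/r)^{\beta+2}$ away from its cap. So the bump contributes at most $CR_j^n\bigl(1+\max\{0,\log(A_jR_j^{-2})\}^{\tau}\bigr)$, whatever $\delta_j$ is. With $A_j=2^{-j}$, bump $j$ gains only if $\log(1/R_j)>\tfrac{j}{2}\log 2$. Since $\sum_jR_j^n\leq C$ for disjoint bumps, the total stays below $C+C\sum_j 2^{-nj/2}j^{\tau}<\infty$. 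Geometric amplitudes are unnecessary anyway: for disjoint supports the sup norm is a maximum, not a sum.

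\emph{A radial peak cannot be glued to a constant.} Once $g'<0$, the inequality $\lambda g''+(n-1)g'/r\leq 0$ gives $(\log|g'|)'\geq-(n-1)/(\lambda r)$. So $g'$ can return to $0$ only if $\int\lambda^{-1}\,\mathrm{d}r=\infty$ near that radius. A jump of $g'$ from negative to zero is a convex corner, which $C^2$ functions touch from below with arbitrarily large radial second derivative. Taking minima does not help either, since your bumps are peaks.

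\emph{The missing idea:} the divergence must come from the \emph{number} of cells at a single small scale, each with amplitude of order one. The paper fixes $R$ and places about $R^{-2}$ identical cells $B_R(2Ry')\subset B_{1/4}$ in the $x'$-plane. Each has profile $\min\{1,\psi_R\}$ with $\psi_R(r)=R^2e^{(R/r)^\beta}+\tfrac{\beta}{2}er^2-(1+\tfrac{\beta}{2})eR^2$. Your $n$-dimensional cells, about $R^{-n}$ of them with $\beta p<n<\beta\tau$, would serve equally well. The prefactor $R^2$ is absorbed by the $r^{-2}$ from two derivatives. So the negative eigenvalue is $-\beta(R/r)^{\beta+2}e^{(R/r)^\beta}$, and its logarithm is $\gtrsim(R/r)^\beta$ with no normalization loss. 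Meanwhile the cap radius satisfies $e^{(R/r_R)^\beta}\approx R^{-2}$, hence $r_R/R\to0$. Each cell costs about $R^2$ in $\int\lambda^{-p}$ (since $\beta p<2$) and gains about $R^2\int_{r_R/R}^1 s^{1-\beta\tau}\,\mathrm{d}s$. Summing over $R^{-2}$ cells keeps $\|\lambda^{-1}\|_{L^p}$ bounded, while the integral diverges as $R\to0$ because $\beta\tau>2$.

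The edge problem is solved in two parts. The quadratic correction gives $\psi_R(R)=\psi_R'(R)=0$. Subtracting the global concave paraboloid $\tfrac{\beta}{2}e|x'|^2$ makes the background a strict supersolution with $\mathcal{M}^-_{\lambda_R,1}(D^2W_R)=-2\beta e$. At $r=R$, any lower test function then satisfies $D^2\tilde\varphi\leq\psi_R''(R^-)\nu\otimes\nu$.

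A minor point: $\underline{\Theta}_u$ is bounded below by the modulus of the negative eigenvalue ($\psi/r$ in your notation), not by the positive eigenvalue $|\psi'|$. On the logarithmic scale this does not matter.
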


\begin{proof}
We set $\beta=1/p+1/\tau$ so that $\beta p<2$ and $\beta \tau>2$. For $R>0$ to be determined later, we define
\begin{equation*}
    \psi_R(r)=
    \begin{cases}
        R^2e^{(R/r)^{\beta}}+\dfrac{\beta}{2}er^2-\left(1+\dfrac{\beta}{2}\right)e R^2 &\text{if $0<r<R$},\\
        0 &\text{if $r \geq R$}.
    \end{cases}
\end{equation*}
Then it is immediate to check that $\psi_R(R)=\psi_R'(R)=0$ and so $\psi_R \in C^1(0, \infty)$. Moreover, a direct computation gives, for $0<r<R$,
\begin{equation*}
    \begin{aligned}
        \psi_R''(r)&=\beta\left(\frac{R}{r}\right)^{\beta+2}e^{(R/r)^{\beta}}\left[\beta\left(\frac{R}{r}\right)^{\beta}+\beta+1\right]+\beta e>0 \\
        \frac{\psi_R'(r)}{r}&=-\beta\left(\frac{R}{r}\right)^{\beta+2}e^{(R/r)^{\beta}}+\beta e<0.
    \end{aligned}
\end{equation*}
For simplicity, we set
\begin{equation*}
    \kappa_R(r) \coloneqq \left[\beta\left(\frac{R}{r}\right)^{\beta}+\beta+1\right]^{-1} \in (0,1).
\end{equation*}
Since $\psi_R'(r)<0$ in $r \in (0, R)$ and $\lim_{r \to 0^+}\psi_R(r)=\infty$, let $r_R \in (0, R)$ be the unique radius satisfying $\psi_R(r_R)=1$. Then since
\begin{equation*}
    1=R^2e^{(R/r_R)^{\beta}}+\dfrac{\beta}{2}er_R^2-\left(1+\dfrac{\beta}{2}\right)e R^2
\end{equation*}
and
\begin{equation*}
    \left|\dfrac{\beta}{2}er_R^2-\left(1+\dfrac{\beta}{2}\right)e R^2 \right| \leq C_{\beta}R^2,
\end{equation*}
we observe that 
\begin{equation*}
    |e^{(R/r_R)^{\beta}}-R^{-2}| \leq C_{\beta},
  \end{equation*}
and so
\begin{equation}\label{eq-limit-rR}
    \lim_{R \to 0}\frac{r_R}{R}=0.
\end{equation}

For $x' \in \mathbb{R}^2$, we define
\begin{equation*}
        V_R(x')
        \coloneqq   
        \sum_{y'\in A_R} \min\{1, \psi_R(|x'-2Ry'|)\},
      \end{equation*}    
where
\begin{equation*}
    A_R \coloneqq \{y' \in \mathbb{Z}^2 : B_R(2Ry') \subset B_{1/4}\} \quad \text{for $R>0$ small}.
\end{equation*}
We note that the supports of the summands are pairwise disjoint up to their boundaries, their number is comparable to $R^{-2}$, $0 \leq V_R \leq 1$, and $\mathrm{supp}V_R \subset B_{1/4}$. Moreover, we observe that
\begin{equation*}
    \mathcal{M}^-_{\kappa_R, 1}(D^2\psi_R(|x'|))=\kappa_R(r) \psi_R''(r)+\frac{\psi_R'(r)}{r}=\beta e(\kappa_R(r)+1) \quad \text{if $0<r<R$}.
\end{equation*}
Finally, we set for $x=(x', x'') \in \mathbb{R}^2 \times \mathbb{R}^{n-2}$, 
\begin{equation*}
    u_R(x) \coloneqq \left(1+\frac{\beta}{2}e\right)^{-1} \left(V_R(x')-\frac{\beta}{2}e|x'|^2\right) \eqqcolon  \left(1+\frac{\beta}{2}e\right)^{-1} W_R(x'),
\end{equation*}
and
\begin{equation*}
    \lambda_R(x) \coloneqq
    \begin{cases}
        \kappa_R(r_R) &\text{if $|x'-2Ry'| \leq r_R$ for some $y' \in A_R$},\\
        \kappa_R(|x'-2Ry'|) &\text{if $r_R < |x'-2Ry'| < R$ for some $y' \in A_R$},\\
        \kappa_R(R) &\text{otherwise}.
    \end{cases}
\end{equation*}
Here we note that $\lambda_R \in C(\mathbb{R}^n)$ with $0<\lambda_R \leq 1$ and $u_R$ is the standard normalization of $W_R$ so that $\|u_R\|_{L^{\infty}(B_1)} \leq 1$. Then we claim that
\begin{equation*}
    \mathcal{M}_{\lambda_R, 1}^-(D^2W_R)\leq 0 \quad \text{in $\mathbb{R}^n$}.
\end{equation*}
Indeed, if $r_R <r=|x'-2Ry'| < R$ for some $y' \in A_R$, then we have $\lambda_R(x)=\kappa_R(r)$, 
\begin{equation*}
    D^2_{x'x'}W_R=D_{x'x'}^2\left(R^2e^{(R/r)^{\beta}}\right)
\end{equation*}
and so 
\begin{equation*}
    \mathcal{M}_{\lambda_R, 1}^-(D^2W_R)=0.
\end{equation*}
On the other hand, if \(  r < r_R  \) or \(  r > R  \), we have $D^2_{x'x'}W_R=-\beta eI_2$ and so
\begin{equation*}
     \mathcal{M}_{\lambda_R, 1}^-(D^2W_R)=-2\beta e<0.
\end{equation*}
It only remains to verify the viscosity supersolution inequality at two interfaces $r=r_R$ and $r=R$. Suppose that a $C^2$ test function $\varphi$ touches $W_R$ from below at an interface point $x_0$ satisfying either $|x_0'-2Ry'|=r_R$ or $|x_0'-2Ry'|=R$ for some $y' \in A_R$. If we let
\begin{equation*}
    \tilde \varphi(x)=\varphi(x)+\frac{\beta}{2}e|x'|^2,
\end{equation*}
then $\tilde \varphi$ touches $V_R$ from below at $x_0$. 
\begin{enumerate}[(i)]
    \item ($r=r_R$) Since $\psi'_R(r_R)<0$ and $V_R(r)=1$ in $0<r<r_R$, it is easy to check that such a $C^2$-test function $\tilde\varphi$ cannot exist.

    \item ($r=R$) We observe that 
    \begin{equation*}
        D^2\tilde \varphi(x_0) \leq \psi_R''(R^-) \nu \otimes \nu,
    \end{equation*}
    where $\nu$ is the outward unit normal to $\partial B_R(2Ry')$. In particular, we have 
    \begin{equation*}
         \mathcal{M}_{\lambda_R(x_0), 1}^-(D^2\varphi(x_0)) \leq \kappa_R(R) \cdot \frac{\beta e}{\kappa_R(R)}-\beta e=0.
    \end{equation*}
\end{enumerate}
Therefore, we conclude that
\begin{equation*}
    \mathcal{M}_{\lambda_R, 1}^-(D^2u_R)\leq 0\eqqcolon f_R \quad \text{in $B_1$}.
\end{equation*}
We now estimate the relevant norm and integrals as follows.
\begin{equation*}
    \begin{aligned}
          \int_{B_1}\lambda_R^{-p}\,\mathrm{d}x &\leq C(n, p,\tau)+C_nR^{-2} \int_{|x'| \leq R} \kappa_R^{-p}(|x'|) \,\mathrm{d}x'\\
            &\leq C(n, p,\tau)+C_nR^{-2} R^2\int_0^1 \left[\beta s^{-\beta}+\beta+1\right]^{p}s \,\mathrm{d}s\\
            &\leq C(n, p, \tau),
    \end{aligned}
\end{equation*}
where we used that $\beta p<2$ and recalled that the number of cells is comparable to $R^{-2}$.

Furthermore, if $r_R < r=|x'-2Ry'| < R$, then we have
\begin{equation*}
    D^2W_R=
    \begin{pmatrix}
D_{x'x'}^2\left(R^2e^{(R/r)^{\beta}}\right) & 0 \\
0 & 0
\end{pmatrix}
.
\end{equation*}
Then the eigenvalues of $D^2W_R$ in this annulus are given by
\begin{equation*}
    \beta\left(\frac{R}{r}\right)^{\beta+2}e^{(R/r)^{\beta}} \cdot \frac{1}{\kappa_R(r)}, \quad -\beta\left(\frac{R}{r}\right)^{\beta+2}e^{(R/r)^{\beta}}, \quad 0, \cdots, 0,
\end{equation*}
and so we obtain
\begin{equation*}
\begin{aligned}
 \underline{\Theta}_{W_R}(x; B_1) &\geq \beta\left(\frac{R}{r}\right)^{\beta+2}e^{(R/r)^{\beta}}, \\
     \|D^2W_R\| &\geq \beta\left(\frac{R}{r}\right)^{\beta+2}e^{(R/r)^{\beta}} \cdot \frac{1}{\kappa_R(r)}.
\end{aligned}
\end{equation*}
Therefore, we conclude that
\begin{equation*}
    \begin{aligned}
         \int_{B_{1/2}} \log^\tau(1+\underline{\Theta}_{u_R}(x; B_1))\,\mathrm{d}x &\geq c(n, p, \tau) R^{-2}\int_{r_R \leq |x'| \leq R}(R/|x'|)^{\beta \tau}\,\mathrm{d}x'\\
         &\geq c(n, p, \tau) \int_{r_R/R}^1s^{-\beta \tau}s\,\mathrm{d}s.
    \end{aligned}
\end{equation*}
By recalling the asymptotic behavior \eqref{eq-limit-rR} of $r_R$ together with $\beta \tau>2$, it turns out that 
\begin{equation*}
    \int_{B_{1/2}}\log^\tau(1+\underline{\Theta}_{u_R}(x; B_1))\,\mathrm{d}x\to \infty \quad \text{as $R \to 0$}.
\end{equation*}
By choosing $R=R(N)>0$ sufficiently small, and then setting $u_N \coloneqq u_R$, $\lambda_N \coloneqq \lambda_R$, and $f_N \coloneqq 0$, we finish the desired construction.
\end{proof}

\section{Strong solutions}\label{sec:strong-solution}
In this section, we establish the global logarithmic Hessian estimate for strong supersolutions. Here $\lambda$ and $f$ are merely measurable, and $\lambda$ may vanish on a set of measure zero.

\begin{definition}[$L^{s}$-strong solution]
    Let $s\geq n$ and let $\Omega\subset\mathbb{R}^n$ be open. Assume that $F(M,x)$ is continuous in $M$ and measurable in $x$, and that $f$ is measurable. A function $u\in W^{2,s}_{\mathrm{loc}}(\Omega)$ is an \emph{$L^{s}$-strong supersolution} of $F(D^2u,x)\leq f$ in $\Omega$ if
    \begin{equation*}
        F(D^2u(x),x)\leq f(x) \quad \text{a.e. in $\Omega$}.
    \end{equation*}
    Strong subsolutions are defined with the reverse inequality, and $u$ is an \emph{$L^{s}$-strong solution} of $F(D^2u,x)=f$ if the equality holds a.e. in $\Omega$.
\end{definition}

The main result of this section is the following strong solution version of Theorem~\ref{thm-log-hessian-estimate}.
\begin{theorem}[Global estimate for strong supersolutions]\label{thm-log-hessian-estimate-strong}
    Let $f$ and $\lambda$ be measurable functions in $B_1$ such that
    \begin{equation*}
        0<\lambda\leq 1 \quad \text{a.e.}, \quad
        \lambda^{-1}\in L^p(B_1), \quad
        \frac{f^+}{\lambda}\in L^{\sigma}(B_1)
    \end{equation*}
    for some $p>n-1$ and $\sigma=\max\{p/(p-n+1), n\}$. Suppose that $u\in W^{2,n}_{\mathrm{loc}}(B_1)\cap L^{\infty}(B_1)$ is an $L^n$-strong supersolution of
    \begin{equation*}
        \mathcal{M}_{\lambda, 1}^-(D^2u)\leq f
        \quad \text{in $B_1$}.
    \end{equation*}
    Then there exists a constant $C>0$, depending only on $n$ and $p$, such that
    \begin{equation*}
        \begin{aligned}
            \int_{B_1}\log^{\frac{p}{n-1}}
            (1+\underline{\Theta}_u(x;B_1))\,\mathrm{d}x
            &\leq C(1+\|\lambda^{-1}\|_{L^p(B_1)}^p)\\
            &\quad+C\log^{\frac{p}{n-1}}
            \left(1+\|u\|_{L^{\infty}(B_1)}
            +\left\|\frac{f^+}{\lambda}\right\|_{L^{\sigma}(B_1)}\right).
        \end{aligned}
    \end{equation*}
\end{theorem}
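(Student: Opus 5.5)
The plan is to reuse the iteration argument in the proof of Theorem~\ref{thm-log-hessian-estimate} without change. That argument uses the equation only through the measure estimate of Lemma~\ref{lem-nonhomo-measure-est}; everything else is pure convex analysis applied to the contact sets $A_{2^k}(u)$. It uses the boundary localization \eqref{eq-global-interior-contact}, the recurrence \eqref{eq-global-recurrence}, the weighted H\"older summation, and the scaling reductions of Steps~1 and~2. So the core task is a strong-solution version of the measure estimate. Concretely: let $u\in W^{2,n}(B_1)\cap C(\overline B_1)$ satisfy $\mathcal{M}^-_{\lambda,1}(D^2u)\le f$ a.e., with $\lambda, f$ measurable. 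Then, for $F\subset B_1\setminus A_a(u)$ and with $E\Subset B_1$ constructed exactly as before,
\begin{equation*}
|F|\le C\int_{A_{2a}(u)\setminus A_a(u)}\Bigl[\lambda^{-(n-1)}+\lambda^{-(n-1)}\tfrac{f^+}{a\lambda}+\bigl(\tfrac{f^+}{a\lambda}\bigr)^n\Bigr]\,\mathrm{d}x .
\end{equation*}
Given this, Step~1 goes through verbatim, with $W^{2,n}$ on $B_1$ replacing $C(\overline B_1)$. Note that $W^{2,n}_{\mathrm{loc}}$ functions are continuous. In Step~2 we dilate to $u_r(x)=u(rx)$, which lies in $W^{2,n}(B_1)$ for $r<1$, and use monotone convergence.

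For the measure estimate, set $v=u/a+|x|^2/2$ and let $V$ be the vertex set. Lemma~\ref{lem-mooney} is purely convex-analytic and still gives $|F|\le|V|$. The sliding lemma, Lemma~\ref{lem-sliding}, must be replaced, since we can no longer test with $C^2$ functions at points of $A_0(v)$. Instead I would argue pointwise a.e. On $E$, $v$ has a paraboloid of opening $-1$ touching from below. At a.e. point of $E$, $u$ is twice differentiable in the $W^{2,n}$ Taylor sense (Calder\'on--Zygmund), so $D^2v\ge -I$ there, and the equation holds there. The same eigenvalue computation as in Lemma~\ref{lem-nonhomo-measure-est} then bounds $\det(I+D^2v)$ by the bracket.

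The inclusion $E\subset A_1(v)$ is immediate. Removing $A_0(v)$ from the integration region is not, and I would handle it a.e. At density points of $A_0(v)$ where $v$ is twice differentiable, $D^2v\ge0$. If also $x\in E$, then the touching paraboloid of opening $-1$ lies below the tangent plane of the convex envelope, so the vertex map satisfies $y=x+\nabla v(x)$ with $\nabla v(x)=\nabla\Gamma^0_v(x)$. The set of such vertices $y$ arising from $E\cap A_0(v)$ must be compared with $V$. Because the paraboloids come from $F$, which lies off $A_0(v)$, they touch $\Gamma_v^0$ on $F$ strictly below $v$. Each of them is then determined by the corresponding vertex, so the touching point in $E\cap A_0(v)$ would coincide with a point of $F$, which is impossible. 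This should show $|E\cap A_0(v)|$ contributes nothing, or at least only a null set.

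The main obstacle is the area formula $|V|\le\int_E\det(I+D^2v)$, since $x\mapsto x+\nabla v(x)$ is only $W^{1,n}$ and Lusin's $N$-property may fail. Following the hint in the introduction, I would approximate $u$ by $u_j\in C^\infty(\overline B_1)$ with $u_j\to u$ in $W^{2,n}$, hence uniformly. Take $v_j=u_j/a+|x|^2/2$, slide the paraboloids with fixed vertices in $V$ under $v_j$, and let $E_j$ be the resulting contact sets. Here $v_j$ is smooth, so the area formula holds, and $D^2v_j\ge -I$ on $E_j$. This gives $|V|\le\int_{E_j}\det(I+D^2v_j)$. Stability of contact sets under uniform convergence gives $E_j\subset E^\delta$ for large $j$. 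The map $M\mapsto\det(I+M)$ is $L^{n}\to L^1$ continuous, so $\det(I+D^2v_j)\to\det(I+D^2v)$ in $L^1$. However, $\det(I+D^2v)$ may be negative off $E$, so I would use $\limsup_j\int_{E_j}g_j\le \int_{\limsup E_j} g^+$ with $g_j=\det(I+D^2v_j)^+$, and control $\limsup_j E_j\subset E$ up to the $\delta$-neighbourhood. On $E^\delta\setminus E$ the limit $g^+$ need not vanish. To deal with this I would pass along a subsequence with $D^2v_j\to D^2v$ a.e. and observe the following. For a.e. $x\in\bigcap_{m}\bigcup_{j\ge m}E_j$, the inequality $D^2v_j(x)\ge -I$ holds infinitely often, and the limiting paraboloid touches at $x$, so $x\in E$. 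Fatou then bounds the integral by $\int_E\det(I+D^2v)$, and the eigenvalue bound there finishes the proof.
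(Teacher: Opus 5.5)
Your proposal is correct and follows the paper's proof essentially step by step: the a.e. replacement of the sliding lemma via differentiability of $v$ at points of $E\cap A_0(v)$, smooth $W^{2,n}$ approximation with the vertex set $V$ held fixed, the area formula for the smooth maps $x\mapsto x+\nabla v_j(x)$, $L^1$ convergence of $\det(I+D^2v_j)$, and then the unchanged iteration and dilation arguments. The only deviation is the final limit. The paper bounds by $\int_{E^\delta}|\det(I+D^2v_j)|$, lets $j\to\infty$ and then $\delta\to0^+$; the mass on $E^\delta\setminus E$ vanishes by dominated convergence because $E$ is compact, so your worry there is moot. Your reverse-Fatou route also works, but $\limsup_j E_j\subset E$ follows simply from compactness of $V$ (after reducing to compact $F$) together with uniform convergence, not from $D^2v_j\ge -I$.
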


We first prove an almost everywhere version of Lemma~\ref{lem-sliding} for Sobolev functions, without the supersolution assumption.

\begin{lemma}\label{lem-sliding-strong}
    Let $v\in W^{2,n}_{\mathrm{loc}}(B_1)$. For $a>0$ and a measurable set $F\subset B_1\setminus A_0(v)$, take the tangent paraboloids of opening $-a$ to $\Gamma_v^0$ on $F$, and slide them up until they touch $v$ on a set $E$. Then
    \begin{equation*}
        |E\cap A_0(v)|=0.
    \end{equation*}
\end{lemma}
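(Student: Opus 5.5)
The plan is to avoid the supersolution inequality entirely and show that every point of $E\cap A_0(v)$ at which $v$ is differentiable leads to a contradiction, using only the monotonicity of the subdifferential of the convex envelope $\Gamma_v^0$. The statement then reduces to the fact that a $W^{2,n}_{\mathrm{loc}}$ function is differentiable almost everywhere.

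\emph{Step 1 (a.e. differentiability).} By the Sobolev embedding, $v\in W^{2,n}_{\mathrm{loc}}(B_1)$ gives $\nabla v\in W^{1,n}_{\mathrm{loc}}(B_1)\subset L^q_{\mathrm{loc}}(B_1)$ for every $q<\infty$. Hence $v\in W^{1,q}_{\mathrm{loc}}(B_1)$ for some $q>n$. By the Calder\'on--Stein theorem (or Morrey's estimate combined with Lebesgue differentiation), $v$ is then (Fr\'echet) differentiable at every point of $B_1\setminus Z$, where $|Z|=0$. It therefore suffices to prove
\begin{equation*}
    E\cap A_0(v)\subset Z.
\end{equation*}
This is the only place where the Sobolev regularity enters. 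It is the step I regard as the main technical point, because $W^{2,n}$ does not embed into $C^1$, so pointwise differentiability must be taken in the a.e.\ sense.

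\emph{Step 2 (matching gradients at a good contact point).} Suppose $x_1\in (E\cap A_0(v))\setminus Z$. By construction, there are a point $x_0\in F$ and a paraboloid $P$ of opening $-a$ with the following properties: $P\leq \Gamma_v^0$, $P(x_0)=\Gamma_v^0(x_0)$, and $P+t\leq v$ in $B_1$ with equality at $x_1$, for some $t\geq 0$. Since $x_1\in B_1$ is interior and $v-(P+t)$ attains its minimum there, differentiability of $v$ at $x_1$ gives $\nabla v(x_1)=\nabla P(x_1)$.

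Since $x_1\in A_0(v)$, the convex function $\Gamma_v^0$ has a supporting affine function $L$ at $x_1$, and $L\leq \Gamma_v^0\leq v$ with $L(x_1)=v(x_1)$. Thus $v-L$ is minimized at $x_1$, so the slope of $L$ equals $\nabla v(x_1)$. Consequently $\nabla P(x_1)\in\partial\Gamma_v^0(x_1)$. On the other hand, tangency at $x_0$ gives $\nabla P(x_0)\in\partial\Gamma_v^0(x_0)$, because $\Gamma_v^0-P$ is minimized at $x_0$ and $P$ is smooth.

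\emph{Step 3 (contradiction via monotonicity).} Monotonicity of the subdifferential of a convex function yields
\begin{equation*}
    \big(\nabla P(x_1)-\nabla P(x_0)\big)\cdot(x_1-x_0)\geq 0 .
\end{equation*}
However, $\nabla P(x_1)-\nabla P(x_0)=-a(x_1-x_0)$, so the left-hand side equals $-a|x_1-x_0|^2$. This forces $x_1=x_0$, which is impossible because $x_0\in F\subset B_1\setminus A_0(v)$ while $x_1\in A_0(v)$.

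Hence $E\cap A_0(v)\subset Z$, and $|E\cap A_0(v)|=0$. One should also check that $E$ and $E\cap A_0(v)$ are measurable; if this is not clear, the same argument shows the set is contained in the null set $Z$, which suffices with outer measure.
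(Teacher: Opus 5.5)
Your proof is correct and follows essentially the same route as the paper. Both arguments use a.e.\ differentiability from the Sobolev embedding, and then observe that at a differentiable point $x_1\in E\cap A_0(v)$ the supporting affine function $L$ of $\Gamma_v^0$ and the slid paraboloid $P+t$ must share the slope $\nabla v(x_1)$. The only difference is the final contradiction. The paper uses concavity of $P$ to get $P+t\le L$ and evaluates at $x_0$ (using $t>0$), whereas you use monotonicity of $\partial\Gamma_v^0$ between $x_0$ and $x_1$ (using $x_0\neq x_1$); these are two formulations of the same comparison.
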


\begin{proof}
By the Sobolev embedding, $v$ is differentiable a.e. Suppose that $x_1\in E\cap A_0(v)$ and that $v$ is differentiable at $x_1$. Let $P$ be the corresponding paraboloid touching $\Gamma_v^0$ from below at $x_0\in F$, so that $P+t$ touches
$v$ from below at $x_1$ for some $t>0$.
Let $L$ be a supporting affine function to $\Gamma_v^0$ at $x_1$. Since both $L$ and $P+t$ touch $v$ from below at $x_1$, we have $DL=Dv(x_1)=DP(x_1)$. The concavity of $P$ gives $P+t\leq L$, and hence
\begin{equation*}
    \Gamma_v^0(x_0)=P(x_0)<P(x_0)+t \leq L(x_0)\leq\Gamma_v^0(x_0),
\end{equation*}
a contradiction. Thus, $v$ is not differentiable at any point of $E\cap A_0(v)$, and the conclusion follows.
\end{proof}

We then prove the strong solution version of Lemma~\ref{lem-nonhomo-measure-est}.
Instead of the inf-convolution and semi-concavity arguments, we use smooth approximation in $W^{2,n}$ via extension and mollification.

\begin{lemma}[Measure estimate for strong supersolutions]
\label{lem-nonhomo-measure-est-strong}
    Let $u\in W^{2,n}(B_1)$ be an $L^n$-strong supersolution of
    \begin{equation*}
        \mathcal{M}_{\lambda,1}^-(D^2u)\leq f
        \quad \text{in $B_1$},
    \end{equation*}
    where $f$ and $\lambda$ are measurable functions in $B_1$
    and $0<\lambda\leq 1$ a.e.
    For $a>0$ and a measurable set $F\subset B_1 \setminus A_a(u)$,
    take the tangent paraboloids of opening $-2a$ to
    $\Gamma_u^a$ on $F$, and slide them up until they touch
    $u$ on a set $E$.
    If $E\Subset B_1$, then there exists a constant $C>0$,
    depending only on $n$, such that
    \begin{equation*}
        |F|\leq C\int_{A_{2a}(u)\setminus A_a(u)}
        \left[
            \lambda^{-(n-1)}
            +\lambda^{-(n-1)}\frac{f^+}{a\lambda}
            +\left(\frac{f^+}{a\lambda}\right)^n
        \right]\,\mathrm{d}x.
    \end{equation*}
\end{lemma}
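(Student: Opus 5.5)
We follow the scheme of Lemma~\ref{lem-nonhomo-measure-est}, replacing inf-convolution by smooth approximation. By inner regularity we may assume $F$ compact. Set $v=u/a+|x|^2/2\in W^{2,n}(B_1)$, so that $A_a(u)=A_0(v)$, $A_{2a}(u)=A_1(v)$, and $E$ is obtained by sliding the opening $-1$ paraboloids tangent to $\Gamma_v^0$ on $F$ until they touch $v$. Let $V$ be the (compact) set of vertices. Lemma~\ref{lem-mooney} gives $|F|\le|V|$, so it suffices to bound $|V|$.

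Since $W^{2,n}\subset C$, $E$ is a closed subset of $B_1$. By Lemma~\ref{lem-sliding-strong} and $E\Subset B_1$ we have $E\subset (A_1(v)\setminus A_0(v))\cup Z$ with $|Z|=0$. At a.e.\ $x\in E$, $v$ is twice differentiable in the $L^n$ sense. At such points the touching from below by $z\mapsto -|z-y|^2/2+c$ gives $Dv(x)=y-x$ and $I+D^2v(x)\ge0$. The strong supersolution inequality $\mathcal{M}^-_{\lambda,1}(D^2v)\le n+f^+/a$ then yields the same pointwise eigenvalue bounds as before:
\[
\det(I+D^2v)\le C\Big[\lambda^{-(n-1)}+\lambda^{-(n-1)}\frac{f^+}{a\lambda}+\Big(\frac{f^+}{a\lambda}\Big)^n\Big]\eqqcolon Cg \quad\text{a.e. on } E.
\]
The remaining task, and the main obstacle, is to justify
\[
|V|\le\int_E\det(I+D^2v)\,\mathrm{d}x,
\]
because the map $x\mapsto x+Dv(x)$ is only $W^{1,n}$ and the area formula is not directly available.

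For this step we extend $v$ to $\tilde v\in W^{2,n}(\mathbb{R}^n)$ and mollify to get $v_j\in C^\infty$ with $v_j\to v$ in $W^{2,n}(B_1)$, and hence uniformly on $\overline B_1$. Fix $\delta>0$ and $E^\delta=\{\dist(\cdot,E)<\delta\}\Subset B_1$. We slide the paraboloids with vertices in $V$ under $v_j$ until they touch on a set $E_j$. By uniform convergence and compactness, as in the proof of Lemma~\ref{lem-nonhomo-measure-est}, $E_j\subset E^\delta$ for large $j$. The set $E_j$ is compact, $y=x+Dv_j(x)$ maps $E_j$ onto $V$, and $I+D^2v_j\ge0$ on $E_j$. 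The smooth area formula then gives
\[
|V|\le\int_{E_j}\det(I+D^2v_j)\,\mathrm{d}x.
\]
We localize with the stability of contact sets: every limit point of $x_j\in E_j$ lies in $E$ (the limit of touching paraboloids with vertex $y_j\to y$ touches $v$), so $E_j\subset E^{\delta_j}$ with $\delta_j\to0$. Since $\det(I+D^2v_j)\to\det(I+D^2v)$ in $L^1(B_1)$, as $D^2v_j\to D^2v$ in $L^n$ and $\det$ is a polynomial of degree $n$, we obtain
\[
|V|\le\limsup_j\int_{E^{\delta_j}}\det(I+D^2v_j)^+\le\int_{E^{\delta}}\det(I+D^2v)^+
\]
for each fixed $\delta$. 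We let $\delta\to0$ by dominated convergence, since $E$ is closed.

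This bound involves $\det(I+D^2v)^+$ on all of $E$. On $E$ itself we have $I+D^2v\ge0$ a.e., so it equals $\det(I+D^2v)\le Cg$. Using $E\subset(A_{2a}(u)\setminus A_a(u))\cup Z$ completes the estimate. The delicate points to check carefully are the Hausdorff-type convergence $E_j\to E$ and the fact that only the positive part of the determinant is needed, which is harmless because we only integrate over neighborhoods shrinking to $E$.
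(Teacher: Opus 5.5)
Your proposal is correct and follows essentially the same route as the paper: reduce to compact $F$, set $v=u/a+|x|^2/2$ and use Lemma~\ref{lem-sliding-strong} to place $E$ in $A_{2a}(u)\setminus A_a(u)$ up to a null set, then extend and mollify, use stability of the contact sets ($E_j\subset E^\delta$), apply the smooth area formula, pass to the limit via $L^1$ convergence of the determinants, and finish with the pointwise eigenvalue bounds a.e.\ on $E$. The only differences are cosmetic: you work with $\det(\cdot)^+$ instead of $|\det(\cdot)|$, and you spell out the a.e.\ second-order differentiability of $W^{2,n}$ functions that gives $I+D^2v\ge 0$ on $E$.
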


\begin{proof}
By inner approximation, we may assume that $F$ is compact.
Set $v=u/a+|x|^2/2$, so that $A_a(u)=A_0(v)$ and
$A_{2a}(u)=A_1(v)$.
Let $V$ be the set of vertices of the paraboloids of opening $-1$
touching $\Gamma_v^0$ from below on $F$.
As in the proof of Lemma~\ref{lem-nonhomo-measure-est},
both $V$ and the corresponding contact set $E$ are compact,
and Lemma~\ref{lem-sliding-strong} gives $|E\cap A_0(v)|=0$.
Since $E\subset A_1(v)$, we obtain
\begin{equation}\label{eq-strong-contact-difference}
    E\subset A_{2a}(u)\setminus A_a(u)
    \quad \text{up to a set of measure zero}.
\end{equation}

By the Sobolev extension theorem and mollification, together
with the Sobolev embedding, we choose a sequence of functions
$\{u_j\}$, smooth in a neighborhood of $\overline B_1$, such that
\begin{equation*}
    u_j\to u
    \quad \text{in $W^{2,n}(B_1)$ and uniformly on $\overline B_1$}.
\end{equation*}
Set $v_j=u_j/a+|x|^2/2$, and let $E_j$ be the contact set
obtained by sliding the paraboloids of opening $-1$ with
vertices in $V$ from below $v_j$.
Uniform convergence and compactness imply that, for every
sufficiently small $\delta>0$,
\begin{equation}\label{eq-strong-contact-stability}
    E_j\subset E^\delta\coloneqq
    \{x\in B_1:\dist(x,E)<\delta\}\Subset B_1
    \quad \text{for all sufficiently large $j$}.
\end{equation}
For such $j$ and each $x\in E_j$, there exists $y\in V$
such that $z\mapsto v_j(z)+|z-y|^2/2$ attains its minimum
on $\overline B_1$ at $x$.
Since $x\in B_1$ and $v_j$ is smooth, we have
\begin{equation*}
    x+\nabla v_j(x)=y
    \quad \text{and} \quad
    I+D^2v_j(x)\geq 0.
\end{equation*}
Thus, the smooth map $m_j(x)=x+\nabla v_j(x)$ maps $E_j$
onto $V$, and the area formula gives
\begin{equation*}
|V|\leq\int_{E_j}\det(I+D^2v_j)\,\mathrm{d}x.
\end{equation*}

By recalling the algebraic inequality
\begin{equation*}
    |\det A-\det B| \leq C(n) (\|A\|+\|B\|)^{n-1}\|A-B\| \quad \text{for any $A, B \in \mathbb{R}^{n \times n}$},
\end{equation*}
we use H\"older's inequality and the convergence $v_j \to v$ in $W^{2, n}(B_1)$ to find that
\begin{equation}\label{eq-strong-determinant-convergence}
\begin{aligned}
    &\|\det(I+D^2v_j)-\det(I+D^2v)\|_{L^1(B_1)}\\
    &\qquad\leq C(n)
    \left(
        \|I+D^2v_j\|_{L^n(B_1)}
        +\|I+D^2v\|_{L^n(B_1)}
    \right)^{n-1}
    \|D^2v_j-D^2v\|_{L^n(B_1)}
    \to 0.
\end{aligned}
\end{equation}
For fixed $\delta>0$, combining
\eqref{eq-strong-contact-stability}--\eqref{eq-strong-determinant-convergence}
and letting $j\to\infty$, we obtain
\begin{equation*}
|V|\leq\lim_{j\to\infty}\int_{E^\delta}|\det(I+D^2v_j)|\,\mathrm{d}x=\int_{E^\delta}|\det(I+D^2v)|\,\mathrm{d}x.
\end{equation*}
Since $E$ is compact and $\det(I+D^2v)\in L^1(B_1)$,
letting $\delta\to0^+$ by the dominated convergence theorem yields
\begin{equation*}
    |V|\leq\int_E|\det(I+D^2v)|\,\mathrm{d}x.
\end{equation*}

Since $E\subset A_1(v)$ and $u$ is a strong supersolution, the eigenvalue estimates in the proof of Lemma~\ref{lem-nonhomo-measure-est} now yield that
\begin{equation*}
    0\leq\det(I+D^2v)\leq C(n)
    \left[
        \lambda^{-(n-1)}
        +\lambda^{-(n-1)}\frac{f^+}{a\lambda}
        +\left(\frac{f^+}{a\lambda}\right)^n
    \right]
    \quad \text{a.e. on $E$},
\end{equation*}
and hence
\begin{equation*}
    |V|\leq C(n)\int_E
    \left[
        \lambda^{-(n-1)}
        +\lambda^{-(n-1)}\frac{f^+}{a\lambda}
        +\left(\frac{f^+}{a\lambda}\right)^n
    \right]\,\mathrm{d}x.
\end{equation*}
Together with \eqref{eq-strong-contact-difference} and
$|F|\leq|V|$ from Lemma~\ref{lem-mooney}, this proves the lemma.
\end{proof}

\begin{proof}[Proof of Theorem~\ref{thm-log-hessian-estimate-strong}]
We first assume that $u\in W^{2,n}(B_1)$, so that $u\in C(\overline{B}_1)$ by the Sobolev embedding. Under the normalization \eqref{eq-global-normalization},
we define the contact sets as in Step 1 of the proof of Theorem~\ref{thm-log-hessian-estimate}. The same argument gives \eqref{eq-global-interior-contact}.
Applying Lemma~\ref{lem-nonhomo-measure-est-strong} instead of Lemma~\ref{lem-nonhomo-measure-est}, we obtain \eqref{eq-global-measure}. The same summation and normalization arguments then prove the estimate.

For a general
$u\in W^{2,n}_{\mathrm{loc}}(B_1)\cap L^\infty(B_1)$ and $r\in(1/2,1)$, let $ u_r(x)=u(rx)$, $\lambda_r(x)=\lambda(rx)$ and $f_r(x)=r^2f(rx)$. Then $u_r\in W^{2,n}(B_1)$ is an $L^n$-strong supersolution with ellipticity $\lambda_r$ and right-hand side $f_r$. Applying the preceding estimate to $u_r$ and letting $r\to1^-$, we conclude by the monotone convergence theorem, as in Step 2 of the proof of Theorem~\ref{thm-log-hessian-estimate}.
\end{proof}

We conclude this section with the $L^n$-strong versions of
\cite[Lemma~4.1 and Proposition~1.4]{Bow26}.
The $L^s$-strong solution version in \cite[Theorem~6.1]{Bow26}
assumes $s>n$ to justify the use of the area formula.
At $s=n$, we justify this step by smooth approximation in $W^{2,n}$, as in the proof of Lemma~\ref{lem-nonhomo-measure-est-strong}.
For the following two results, we use the notation
\begin{equation*}
    \kappa\coloneqq\frac{p}{p-(n-1)}
    \quad\text{and}\quad
    \Gamma_p(B)\coloneqq
    \left(\fint_B\lambda^{-p}\,\mathrm{d}x\right)^{1/p}
    \quad \text{for } p>n-1.
\end{equation*}

We first prove the $L^n$-strong version of \cite[Lemma~4.1]{Bow26}.
The key idea is to apply the area formula for smooth approximations and pass to the limit.

\begin{lemma}\label{lem-log-measure-strong}
Let $\lambda$ be measurable in $B_2$ with
$0<\lambda\leq1$ a.e. and
$\lambda^{-1}\in L^p(B_2)$ for some $p>n-1$.
Suppose that $u\in W^{2,n}_{\mathrm{loc}}(B_2)$ is a
nonnegative $L^n$-strong subsolution of
\begin{equation*}
    \mathcal{M}_{\lambda,1}^+(D^2u)\geq0
    \quad\text{in $B_2$},
\end{equation*}
and that $u(0)=2$.
Then, for every $q>n-1$, there exists a constant $C>0$,
depending only on $n$, $p$ and $q$, such that
\begin{equation*}
    1\leq C\Gamma_p(B_2)^{n-1}
    \left(
        \int_{B_2\cap\{u>1\}}(\log u)^{q\kappa}\,\mathrm{d}x
    \right)^{1/\kappa}.
\end{equation*}
\end{lemma}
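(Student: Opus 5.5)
The plan is to reduce the statement to a weighted estimate
\begin{equation*}
    1\leq C\int_{B_2\cap\{u>1\}}\lambda^{-(n-1)}(\log u)^{q}\,\mathrm{d}x .
\end{equation*}
Applying H\"older's inequality with exponents $p/(n-1)$ and $\kappa=p/(p-n+1)$ then gives
\begin{equation*}
    \int\lambda^{-(n-1)}(\log u)^q\,\mathrm{d}x\leq C\,\Gamma_p(B_2)^{n-1}\Big(\int_{\{u>1\}}(\log u)^{q\kappa}\,\mathrm{d}x\Big)^{1/\kappa},
\end{equation*}
which is the claim. The weighted estimate itself will come from an ABP-type contact argument, as in Bowman's proof, for a transformed function. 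The area formula step will be justified at the endpoint $W^{2,n}$ by smooth approximation, exactly as in Lemma~\ref{lem-nonhomo-measure-est-strong}.

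\emph{Construction.} Set $g(x)=\log(1-|x|^2/4)$ and consider
\begin{equation*}
    h=\Phi(\log u)+g \quad \text{on } \{u>1\},
\end{equation*}
with an increasing concave profile $\Phi$. A natural choice is $\Phi(s)=-s^{-\gamma}$ with $\gamma>0$ tied to $q$, so that $\Phi'(s)\sim s^{-\gamma-1}$; the powers of $\log u$ produced by the Jacobian should then combine into the weight $(\log u)^q$. For each slope $\xi$ in a small ball $B_{c}$, maximize $h-\xi\cdot x$, extended suitably on $\{u\le 1\}$.
\begin{itemize}
    \item The value at $0$ is fixed by $u(0)=2$.
    \item $g\to-\infty$ at $\partial B_2$.
    \item On $\{u\leq1\}$ the function is strictly smaller once $|\xi|<c$.
\end{itemize}
Hence every maximum point lies in a compact subset of $\{u>1\}\cap B_2$. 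At such a point $Dh=\xi$ and $D^2h\leq0$. The contact set $E$ is compact, and the map $x\mapsto Dh(x)$ covers $B_c$.

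\emph{Determinant bound.} At contact points, $D^2h\leq0$ together with $|Dh|<c$ and the explicit form of $D^2g$ gives an upper bound on $D^2u/u$ in terms of $D\log u\otimes D\log u$ and bounded terms. The slope condition $\Phi'(\log u)\,D\log u=\xi-Dg$ controls $D\log u$ by a power of $\log u$. The subsolution inequality $\mathcal{M}^+_{\lambda,1}(D^2u)\geq0$ says the sum of positive eigenvalues of $D^2u$ is at least $\lambda$ times the sum of the absolute values of the negative ones. This converts the upper bound into a lower bound of size $-C(\cdot)/\lambda$. Consequently, a.e.\ on $E$,
\begin{equation*}
    |\det D^2h|\leq C\lambda^{-(n-1)}(\log u)^{q},
\end{equation*}
where one eigenvalue is bounded and $n-1$ are of size $\lambda^{-1}$, and the exponent of $\log u$ comes from the choice of $\gamma$. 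Choosing $\gamma$ to produce exactly $(\log u)^q$ with $q>n-1$ is the bookkeeping I expect to be most delicate. If the single power profile does not close, I would instead integrate over slopes with a weight $\omega(\xi)$ and use $\int\omega\leq\int_E\omega(Dh)|\det D^2h|$.

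\emph{Endpoint justification.} This is the main technical obstacle: $u\in W^{2,n}$ only, so $x\mapsto Dh(x)$ is $W^{1,n}$ and the area formula may fail. First restrict to $u\in W^{2,n}(B_{2-\epsilon})$ by working on a slightly smaller ball, with a rescaled $g$. Approximate $u$ by smooth $u_j$, converging uniformly and in $W^{2,n}$. Form $h_j$ and the contact sets $E_j$ for the same slopes; these stay in a $\delta$-neighborhood $E^\delta$ of $E$, a compact subset of $\{u>1\}$, by uniform convergence, where $u$ is bounded below. Apply the area formula to the smooth maps $Dh_j$ to get $|B_c|\leq\int_{E^\delta}|\det D^2h_j|$. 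Pass to the limit using the determinant inequality and H\"older's inequality as in \eqref{eq-strong-determinant-convergence}; the nonlinear dependence of $h_j$ on $u_j$ is harmless because $u_j$ is bounded above and below on $E^\delta$. Then let $\delta\to0$ and $\epsilon\to0$, apply the a.e.\ pointwise bound above on $E$, and finish with H\"older's inequality.
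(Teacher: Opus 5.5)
Your reduction to the weighted bound $1\le C\int_E\lambda^{-(n-1)}(\log u)^q$, the final H\"older step, and the endpoint justification (smooth approximation, stability of contact sets, $L^1$ convergence of Jacobians) are fine and match the paper. The gap is at the step you flagged. When planes slide over $h=\Phi(\log u)+g$, the claim that one eigenvalue of $-D^2h$ is bounded is false. As a result, your construction cannot reach exponents $q\in(n-1,n]$.

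Write $w=\log u$. Then
\begin{equation*}
    D^2h=\Phi'(w)\frac{D^2u}{u}+\big(\Phi''(w)-\Phi'(w)\big)Dw\otimes Dw+D^2g,
    \qquad
    Dw=\frac{\xi-Dg}{\Phi'(w)}\ \text{at contact points}.
\end{equation*}
So the rank-one term has size $(\Phi'-\Phi'')|\xi-Dg|^2/\Phi'^2\geq|\xi-Dg|^2/\Phi'(w)$. For $\Phi(s)=-s^{-\gamma}$ this is of order $w^{1+\gamma}$.

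The subsolution inequality only guarantees that $D^2u$ has a nonnegative eigenvalue. Hence the smallest eigenvalue of $-D^2h$ is bounded only by this rank-one term, not by a constant.

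This is sharp pointwise. Work in a frame with $e_1\parallel Dw$ and take $D^2u/u=\mathrm{diag}(a,-b,\dots,-b)$, where $\Phi'(w)a$ equals half the rank-one term and $b=a/((n-1)\lambda)$. Then $\mathcal{M}^+_{\lambda,1}(D^2u)=0$ and $D^2h\leq0$. However, $-D^2h$ has one eigenvalue of order $w^{1+\gamma}$ and $n-1$ eigenvalues of order $w^{1+\gamma}/\lambda$.

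So your method gives at best $|\det D^2h|\leq C\lambda^{-(n-1)}(\log u)^{n(1+\gamma)}$, which proves the lemma only for $q>n$.

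Changing the profile does not help. $\Phi$ must stay bounded above, as in your set-up, so that the contact points lie in a fixed compact set where $Dg$ and $D^2g$ are bounded. Then $\Phi'$ is decreasing and integrable at infinity, which forces $1/\Phi'(w)\gg w$, and the determinant contains $n$ such factors.

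A weight $\omega(\xi)$ on slopes cannot compensate either, since $\xi\in B_c$ carries no information about $\log u$ at the contact point. The statement requires every $q>n-1$, and its later use takes $q$ just above $n-1$ when $p$ is close to $n(n-1)$. So this is a genuine gap.

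The missing idea is to parametrize by the centres of translates of a fixed barrier rather than by slopes, as the paper does following Bowman. One slides $U(\cdot-y)$, $y\in\overline B_{1/4}$, with $U=c_0\exp(\exp(1/(1-|x|^2)))$, and uses the map $m(x)=x-(DU)^{-1}(Du(x))$. Its Jacobian is $\det(D^2U-D^2u)/\det D^2U$. Dividing by $\det D^2U$ cancels the large radial eigenvalue $U''$ and leaves roughly $\lambda^{-(n-1)}\big(rU''(r)/U'(r)\big)^{n-1}$ at $r=|x-y|$, with $U$ viewed as a radial profile.

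For the double exponential this ratio grows only like $\log U$ times $\log\log$ factors. Since $u\geq U>1$ on the contact set, this yields $|\det Dm|\leq C\lambda^{-(n-1)}(\log u)^q$ for every $q>n-1$. The slope parametrization has no such normalization, and one full factor is lost.
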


\begin{proof}
We use the double exponential blow-up barrier from \cite[Lemma~4.1]{Bow26}:
\begin{equation*}
    U(x)\coloneqq
    c_0\exp\left(\exp\left(\frac{1}{1-|x|^2}\right)\right),
    \quad
    \text{where }c_0=2\exp(-\exp(16/15)),
\end{equation*}
so that $U(0)>1$, $U\leq2$ on $\overline B_{1/4}$,
and $U\to\infty$ as $|x|\to1$.
Moreover, $D^2U\geq c_1I$ for some $c_1>0$, and
$DU:B_1\to\mathbb{R}^n$ is a smooth bijection.
In particular, $(DU)^{-1}$ is smooth on $\mathbb{R}^n$ with bounded derivative.

By mollification, we choose smooth functions $\{u_j\}$ such that $u_j(0)=2$ and
\begin{equation*}
    u_j\to u
    \quad\text{in $W^{2,n}(B_{3/2})$ and uniformly on $\overline B_{3/2}$}.
\end{equation*}
For each $y\in V \coloneqq \overline B_{1/4}$, maximize $u_j(z)-U(z-y)$ over $B_1(y)$,
and let $E_j$ be the set of all corresponding contact points; define $E$ in the same way for $u$. Then we observe that 
\begin{equation*}
    u_j(x)-U(x-y)
    \geq u_j(0)-U(-y)
    =2-U(y)\geq0
\end{equation*}
at every contact point $x\in E_j$ with center $y\in V$; the same inequality holds for $u$ on $E$.

The uniform boundedness of $u_j$ and the blow-up of $U$ on $\partial B_1$ show that $E_j, E \Subset B_{5/4}$ for all sufficiently large $j$. Moreover, uniform convergence
and compactness again imply that, for every sufficiently small $\delta>0$,
\begin{equation}\label{eq-log-contact-stability}
    E_j\subset E^\delta\coloneqq
    \{x\in B_{3/2}:\dist(x,E)<\delta\}\Subset B_{3/2}
    \quad\text{for all sufficiently large $j$}.
\end{equation}

At a contact point $x\in E_j$ with center $y\in V$, the contact condition gives
\begin{equation*}
    Du_j(x)=DU(x-y).
\end{equation*}
Thus, the smooth map
\begin{equation*}
    m_j(x)\coloneqq x-(DU)^{-1}(Du_j(x))
\end{equation*}
maps $E_j$ onto $V$.
We also let
\begin{equation*}
    m(x)\coloneqq x-(DU)^{-1}(Du(x)).
\end{equation*}
By differentiating $Du_j(x)=DU(x-m_j(x))$, we have
\begin{equation*}
    D^2u_j(x)=D^2U(x-m_j(x))(I-Dm_j(x)) \quad \text{and so} \quad Dm_j=I-(D^2U)^{-1}D^2u_j \in L^n(B_{3/2}).
\end{equation*}

We now claim that, after passing to a subsequence,
\begin{equation*}
    m_j\to m \quad \text{in $W^{1,n}(B_{3/2})$}.
\end{equation*}
Indeed, the convergence in $L^n(B_{3/2})$ follows from the
Lipschitz continuity of $(DU)^{-1}$.
Passing to a subsequence, we may assume that $Du_j\to Du$ (and so $m_j \to m$)
a.e. in $B_{3/2}$. Set
\begin{equation*}
    A_j(x)\coloneqq[D^2U(x-m_j(x))]^{-1},
    \quad \text{and} \quad
    A(x)\coloneqq[D^2U(x-m(x))]^{-1}.
\end{equation*}
Then $A_j\to A$ a.e., and $D^2U\geq c_1I$ implies $\|A_j\|,\|A\|\leq \sqrt{n} c_1^{-1}$.
Since 
\begin{equation*}
    Dm_j-Dm=-A_j(D^2u_j-D^2u)-(A_j-A)D^2u,
\end{equation*}
we have
\begin{equation*}
        \|Dm_j-Dm\|_{L^n(B_{3/2})}
        \leq \sqrt{n} c_1^{-1}\|D^2u_j-D^2u\|_{L^n(B_{3/2})}+\|(A_j-A)D^2u\|_{L^n(B_{3/2})}
        \to 0,
\end{equation*}
where the last term tends to zero by the dominated convergence theorem.

Therefore, as in the proof of Lemma~\ref{lem-nonhomo-measure-est-strong}, we obtain
\begin{equation*}
    \|\det Dm_j-\det Dm\|_{L^1(B_{3/2})} \to0 \quad \text{as $j \to \infty$}.
\end{equation*}
Since $m_j(E_j)=V$, the area formula and
\eqref{eq-log-contact-stability} yield
\begin{equation*}
    |V|
    \leq\int_{E_j}|\det Dm_j|\,\mathrm{d}x
    \leq\int_{E^\delta}|\det Dm_j|\,\mathrm{d}x
\end{equation*}
for all sufficiently large $j$.
Letting first $j\to\infty$ and then $\delta\to0^+$,
we arrive at
\begin{equation}\label{eq-local-maximum-contact-measure}
    |V|\leq\int_E|\det Dm|\,\mathrm{d}x.
\end{equation}

Since a strong subsolution $u\in W^{2,n}_{\mathrm{loc}}(B_2)$ admits
a second-order expansion a.e., the eigenvalue estimates in the proof of
\cite[Lemma~4.1]{Bow26} yield
\begin{equation*}
    |\det Dm(x)|
    \leq C\lambda(x)^{-(n-1)}(\log u(x))^q
    \quad\text{a.e. on $E$},
\end{equation*}
where we use $u(x)\geq U(x-m(x))>1$ on $E$. Combining this with \eqref{eq-local-maximum-contact-measure} and H\"older's inequality, we find that
\begin{equation*}
\begin{aligned}
    |V|
    &\leq C\int_E
    \lambda^{-(n-1)}(\log u)^q\,\mathrm{d}x\\
    &\leq C
    \left(\int_{B_2}\lambda^{-p}\,\mathrm{d}x\right)^{(n-1)/p}
    \left(
        \int_{B_2\cap\{u>1\}}
        (\log u)^{q\kappa}\,\mathrm{d}x
    \right)^{1/\kappa}.
\end{aligned}
\end{equation*}
Since $|V|=|B_{1/4}|$, the definition of $\Gamma_p$
gives the conclusion.
\end{proof}

We now obtain the $L^n$-strong solution version of
\cite[Proposition~1.4]{Bow26}.
\begin{proposition}[Logarithmic local maximum principle]
\label{prop-log-local-maximum-strong}
Let $\lambda$ be measurable in $B_3$ with
$0<\lambda\leq1$ a.e. and
$\lambda^{-1}\in L^p(B_3)$ for some $p>n-1$.
Suppose that $u\in W^{2,n}_{\mathrm{loc}}(B_3)$ is a
nonnegative $L^n$-strong subsolution of
\begin{equation*}
    \mathcal{M}_{\lambda,1}^+(D^2u)\geq0
    \quad\text{in $B_3$}.
\end{equation*}
Fix $q>n-1$. If $\sup_{B_1}u>2$ and $\tau>\kappa q$, then
\begin{equation*}
    \sup_{B_1}u\leq2\exp\left[
        C\Gamma_p(B_3)^{\frac{\kappa(n-1)}{\tau-\kappa q}}
        \left(
            \int_{B_3\cap\{u>1\}}
            (\log u)^\tau\,\mathrm{d}x
        \right)^{\frac{1}{\tau-\kappa q}}
    \right],
\end{equation*}
where $C>0$ depends only on $n$, $p$, $q$ and $\tau$.
\end{proposition}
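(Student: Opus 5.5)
The plan is to deduce Proposition~\ref{prop-log-local-maximum-strong} from Lemma~\ref{lem-log-measure-strong} by a rescaling-and-translation argument, following the scheme of \cite[Proposition~1.4]{Bow26}. Write $M\coloneqq\sup_{B_1}u>2$. Since $u$ is continuous by the Sobolev embedding, pick $x_0\in\overline B_1$ with $u(x_0)$ arbitrarily close to $M$. We may assume $u(x_0)>2$.

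First, normalize the value at the center by a power transformation. Set
\begin{equation*}
    \theta\coloneqq\frac{\log 2}{\log u(x_0)}\in(0,1), \qquad w(x)\coloneqq u(x_0+x)^{\theta}.
\end{equation*}
Then $w(0)=2$. The map $t\mapsto t^\theta$ is concave, so $w$ need not be a subsolution. I therefore avoid this route and instead use the multiplicative normalization $w(x)=2u(x_0+x)/u(x_0)$. This $w$ is a nonnegative $L^n$-strong subsolution of $\mathcal M^+_{\lambda(x_0+\cdot),1}(D^2w)\ge0$ in $B_2$, because $B_2(x_0)\subset B_3$, and it satisfies $w(0)=2$.

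Applying Lemma~\ref{lem-log-measure-strong} to $w$ with the chosen $q$ gives
\begin{equation*}
1\le C\,\Gamma_p(B_2(x_0))^{n-1}\Big(\int_{B_2\cap\{w>1\}}(\log w)^{q\kappa}\Big)^{1/\kappa}.
\end{equation*}
Since $|B_3|/|B_2|$ is dimensional, $\Gamma_p(B_2(x_0))\le C\Gamma_p(B_3)$.

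It remains to compare $\log w$ with $\log u$. Write $L\coloneqq\log(u(x_0)/2)$. On $\{w>1\}$ we have $\log u=\log w+L>L$, so $\{w>1\}\subset\{u>1\}$ after translation. Moreover, since $\log w\le\log u$ there, Chebyshev-type interpolation gives
\begin{equation*}
\int_{\{w>1\}}(\log w)^{q\kappa}\le\int_{\{u>e^{L}\}}(\log u)^{q\kappa}\le L^{q\kappa-\tau}\int_{B_3\cap\{u>1\}}(\log u)^\tau,
\end{equation*}
using $(\log u)^{q\kappa}\le(\log u)^\tau L^{q\kappa-\tau}$ when $\log u\ge L$ and $\tau>q\kappa$.

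Combining these gives
\begin{equation*}
L^{(\tau-\kappa q)/\kappa}\le C\,\Gamma_p(B_3)^{n-1}\Big(\int(\log u)^\tau\Big)^{1/\kappa},
\end{equation*}
that is,
\begin{equation*}
L\le C\,\Gamma_p^{\kappa(n-1)/(\tau-\kappa q)}\Big(\int(\log u)^\tau\Big)^{1/(\tau-\kappa q)}.
\end{equation*}
Hence $u(x_0)\le 2e^{L}$ is bounded by the claimed quantity. Letting $u(x_0)\to M$ concludes the proof.

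The main obstacle is the Chebyshev step. If $L<1$, the bound $(\log u)^{q\kappa}\le(\log u)^\tau L^{q\kappa-\tau}$ still holds on $\{\log u\ge L\}$, since $q\kappa-\tau<0$. The only delicate case is $L\to0$, but then the conclusion $u(x_0)\le 2e^{L}$ is trivially close to $2$ anyway. One must also check that the region of integration $\{w>1\}$ lies inside $\{u>1\}$, which holds because $w>1$ forces $u>u(x_0)/2>1$. No further approximation is needed, since Lemma~\ref{lem-log-measure-strong} already handles the $W^{2,n}$ endpoint.
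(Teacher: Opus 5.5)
Your argument is correct and is essentially the paper's proof. The paper simply defers to the proof of \cite[Proposition~1.4]{Bow26} with Lemma~\ref{lem-log-measure-strong} in place of \cite[Lemma~4.1]{Bow26}, and your steps are that reduction written out: translate to a near-maximum point $x_0\in\overline B_1$ so that $B_2(x_0)\subset B_3$, normalize to $w=2u(x_0+\cdot)/u(x_0)$, apply the lemma, and use $(\log u)^{q\kappa}\le L^{q\kappa-\tau}(\log u)^\tau$ on $\{u>e^L\}$. The abandoned $u^\theta$ detour and the worry about $L\to0$ are unnecessary, since the final inequality holds for every $L>0$.
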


\begin{proof}
The conclusion follows by the same argument as in the proof of \cite[Proposition~1.4]{Bow26}, using Lemma~\ref{lem-log-measure-strong} instead of \cite[Lemma~4.1]{Bow26}.
\end{proof}

\section{Grushin-type operators}\label{sec-application}
In this section, we apply Theorem~\ref{thm-log-hessian-estimate-strong}
and Proposition~\ref{prop-log-local-maximum-strong} to study the regularity
of solutions to Grushin-type equations. For $x=(y,z)\in\mathbb{R}^{n_1}\times\mathbb{R}^{n_2}$ with
$n_1,n_2\geq 1$ and $n=n_1+n_2$, we let $G_{\gamma}$ be the
Grushin-type operator defined by
\begin{equation*}
    G_{\gamma}u\coloneqq |y|^{\gamma}\Delta_yu+\Delta_zu.
\end{equation*}
We are interested in the regularity of $L^n$-strong solutions
$u\in W^{2,n}_{\mathrm{loc}}(B_1)$ of
\begin{equation*}
    G_{\gamma}u=0
    \quad \text{in $B_1$}.
\end{equation*}
We note that when $\gamma<0$, the equation is understood in the
normalized form
\begin{equation*}
    \Delta_yu+|y|^{-\gamma}\Delta_zu=0
    \quad \text{in $B_1$},
\end{equation*}
so that $\Lambda\equiv 1$.
In particular, the normalized ellipticity is given by $\lambda_{\gamma}(y)\coloneqq |y|^{|\gamma|}$, and
\begin{equation*}
    \int_{B_1}\lambda_{\gamma}^{-p}\,\mathrm{d}x
    =n_1|B_1^{n_1}||B_1^{n_2}|
    \int_0^1
    r^{n_1-1-|\gamma|p}(1-r^2)^{n_2/2}\,\mathrm{d}r.
\end{equation*}
Thus, we have $\lambda_{\gamma}^{-1}\in L^p(B_1)$ if and only if
$|\gamma|p<n_1$.

\begin{proposition}\label{prop-Grushin-strong}
Let $n=n_1+n_2$, and suppose that
\begin{equation*}
    |\gamma|<\frac{n_1}{n(n-1)}.
\end{equation*}
Let $u\in W^{2,n}_{\mathrm{loc}}(B_1) \cap L^{\infty}(B_1)$ be an $L^n$-strong solution of
\begin{equation*}
    G_{\gamma}u=0
    \quad \text{in $B_1$}.
\end{equation*}
Then we have $u\in C_{z,\mathrm{loc}}^{\infty}(B_1)$ and
\begin{itemize}
    \item if $\gamma=0$, then $u\in C_{\mathrm{loc}}^{\infty}(B_1)$;
    \item if $\gamma<0$, then $u\in C_{\mathrm{loc}}^{2,-\gamma}(B_1)$;
    \item if $\gamma>0$, then $u\in C_{\mathrm{loc}}^{1,\beta}(B_1)$ for every $\beta \in (0, 1-\gamma)$.
\end{itemize}
\end{proposition}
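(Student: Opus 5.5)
Difference quotients in $z$ solve the same equation, because the coefficients of $G_\gamma$ do not depend on $z$; the plan is to exploit this. For $h\in\mathbb{R}^{n_2}$ small and $e$ a unit vector in the $z$-directions, let $w_h(x)=(u(x+he)-u(x))/|h|$. On $B_{1-|h|}$ it is an $L^n$-strong solution of $G_\gamma w_h=0$. After normalizing (dividing by $|y|^\gamma$ when $\gamma<0$), this becomes $a_{ij}D_{ij}w_h=0$ with ellipticity $\lambda_\gamma=|y|^{|\gamma|}$, so $\pm w_h$ are strong super- and subsolutions of the Pucci inequalities with $f=0$. The hypothesis $|\gamma|<n_1/(n(n-1))$ lets us choose $p$ with $n(n-1)<p<n_1/|\gamma|$. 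Then $\lambda_\gamma^{-1}\in L^p$, and the logarithmic exponent $\tau=p/(n-1)$ exceeds $n$.

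\textbf{Step 1 (first $z$-derivatives).} Apply Theorem~\ref{thm-log-hessian-estimate-strong} to $\pm u$ on slightly smaller balls. This bounds $\int\log^{\tau}(1+\|D^2u\|)$ in terms of $\|u\|_{L^\infty}$. Since $u\in W^{1,\infty}_{\mathrm{loc}}$ is not yet known, I would instead use that $u\in W^{2,n}_{\mathrm{loc}}\subset C^{0,\alpha}$ and work with second differences. Alternatively, one can bound $D_zu$ locally directly: $D_zu\in W^{1,n}_{\mathrm{loc}}$ is itself a strong solution once we know $u\in W^{3,n}$ in $z$. To avoid this circularity, I apply Proposition~\ref{prop-log-local-maximum-strong} to $v=(w_h)^+ +2$ (or to $\log$-rescaled versions of it).

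\textbf{Step 2 (sup bounds on $w_h$).} The quantity $\int(\log w_h)^{\tau'}$ over $\{w_h>1\}$ is controlled by $\int\log^{\tau'}(1+|Du|)$. That integral is finite because $Du\in W^{1,n}$ and $\log^{\tau'}$ grows slower than any power. Choosing $q$ slightly above $n-1$ and $\tau'>\kappa q$, which is possible since $\kappa=p/(p-n+1)$ is close to $1$ when $p$ is large, Proposition~\ref{prop-log-local-maximum-strong} gives $\sup|w_h|\le C$ uniformly in $h$. Hence $D_zu\in L^\infty_{\mathrm{loc}}$, and $D_zu$ is again a bounded strong solution. For the second step we need $\int\log^{\tau'}(1+|D_zD_zu|)$ uniformly in $h$ for the second differences. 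Theorem~\ref{thm-log-hessian-estimate-strong} applied to $w_h$, now bounded uniformly, supplies exactly this with $\tau'=p/(n-1)$. The same local maximum principle then bounds the second-order difference quotients. Iterating, every $D_z^ku$ is locally bounded, and $D_z^ku$ is itself a bounded solution of the same equation. This gives $u\in C^\infty_{z,\mathrm{loc}}$.

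\textbf{Step 3 (regularity in $y$).} Write the equation as $\Delta_yu=-|y|^{-\gamma}\Delta_zu$ for $\gamma>0$, or as $\Delta_yu=-|y|^{|\gamma|}\Delta_zu$ for $\gamma<0$, and view it on each slice in $y$, or better as a full Poisson equation $\Delta u=g$ in $x$. Here $g=(1-|y|^{-\gamma})\Delta_zu$, respectively $(1-|y|^{|\gamma|})\Delta_zu$. The factor $\Delta_zu$ is smooth in $z$, and its regularity in $y$ is bootstrapped: initially it is bounded, and it is Hölder with an improving exponent after each round. For $\gamma<0$, $|y|^{|\gamma|}$ is $C^{0,|\gamma|}$, so $g\in C^{0,-\gamma}$ and Schauder theory yields $C^{2,-\gamma}$. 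For $\gamma>0$, $|y|^{-\gamma}\in L^{s}$ for all $s<n_1/\gamma$, and in fact it lies in the Morrey space giving $\Delta u\in L^{n_1/\gamma-}$ in $y$ with bounded $z$-dependence. Calderón--Zygmund estimates together with the scaling $|y|^{-\gamma}$ give $Du\in C^{0,\beta}$ for $\beta<1-\gamma$.

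\textbf{Main obstacle.} The main obstacle is Step 2: getting uniform-in-$h$ control of the logarithmic integrals of the difference quotients, so that the local maximum principle applies with the exponent constraint $\tau>\kappa q$. The threshold on $|\gamma|$ is chosen precisely so that $p/(n-1)>n\ge\kappa q$ can be arranged.
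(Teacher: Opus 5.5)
Your overall architecture matches the paper's. Difference quotients in $z$ solve the same equation. Theorem~\ref{thm-log-hessian-estimate-strong} supplies logarithmic integrability, which Proposition~\ref{prop-log-local-maximum-strong} turns into sup bounds. The condition $p>n(n-1)$ is exactly what permits $p/(n-1)>\kappa q$. The $y$-regularity comes from $\Delta u=(1-|y|^{-\gamma})\Delta_z u$. However, the step you yourself call the main obstacle is not resolved, and this is a genuine gap.

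Theorem~\ref{thm-log-hessian-estimate-strong} controls $\int\log^{\tau}(1+\underline{\Theta})$, and you use it as a bound on $\int\log^{\tau}(1+\|D^2\cdot\|)$. A Hessian bound of that type does not give $h$-uniform bounds on $\int\log^{\tau}(1+|\text{difference quotient}|)$. Difference quotients are averages of derivatives along segments, and $\log^{\tau}$-integrability, unlike $L^1$-integrability, is not preserved under such averaging. For example, $|x\cdot e|^{-1}$ is $\log^\tau$-integrable but has infinite averages along segments in direction $e$ that cross $\{x\cdot e=0\}$. This is why your first stage is fine: there you only use $Du\in L^1_{\mathrm{loc}}$ together with $\log^{\tau'}(1+t)\leq C(1+t)$. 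By contrast, your sentence ``Theorem~\ref{thm-log-hessian-estimate-strong} applied to $w_h$ \dots\ supplies exactly this'' is unsupported. At every later level of the bootstrap, the only $h$-uniform information on the previous difference quotient is an $L^\infty$ bound, since its $W^{2,n}$ norm is not uniformly controlled.

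The missing idea is an elementary pointwise estimate for the second difference quotient $v_{h,e}$:
\[
|v_{h,e}(x)|=\left|\frac{u(x+he)+u(x-he)-2u(x)}{h^2}\right|\leq\max\{\underline{\Theta}_u(x;B_1),\overline{\Theta}_u(x;B_1)\}\qquad(x\in B_{1/2},\ 0<|h|<1/4).
\]
It follows by evaluating at $x\pm he$ the paraboloids in the definitions of $\underline{\Theta}_u$ and $\overline{\Theta}_u$. Combined with Theorem~\ref{thm-log-hessian-estimate-strong} for $\pm u$, it bounds $\int_{B_{1/2}}\log^{\tau}(1+|v_{h,e}|)$ by a constant depending only on $\|u\|_{L^\infty}$ and $\|\lambda_\gamma^{-1}\|_{L^p}$. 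Proposition~\ref{prop-log-local-maximum-strong} applied to $\sqrt{1+v_{h,e}^2}$ then gives $D^2_{zz}u\in L^\infty_{\mathrm{loc}}$ directly. The same argument, applied to the uniformly bounded solutions $\delta_{h,e}u$ (your $w_h$), runs the whole bootstrap. Your first-difference stage becomes unnecessary, although it is a correct alternative route to $D_zu\in L^\infty_{\mathrm{loc}}$.

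A few smaller repairs are also needed. Proposition~\ref{prop-log-local-maximum-strong} requires $W^{2,n}_{\mathrm{loc}}$ subsolutions, and $(w_h)^++2$ is not in $W^{2,n}_{\mathrm{loc}}$. Use $\sqrt{1+w^2}$ instead: it lies in $W^{2,n}_{\mathrm{loc}}$ because $Dw\in L^{2n}_{\mathrm{loc}}$, and it is a subsolution by convexity of $t\mapsto\sqrt{1+t^2}$.

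Likewise, $D_zu$ is not known to lie in $W^{2,n}_{\mathrm{loc}}$, so it is not yet a strong solution. The iteration has to be run on difference quotients. It closes precisely because the constants in Theorem~\ref{thm-log-hessian-estimate-strong} and Proposition~\ref{prop-log-local-maximum-strong} do not depend on $W^{2,n}$ norms.

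Finally, for $\gamma>0$, Calder\'on--Zygmund in $\mathbb{R}^n$ with $\Delta u\in L^s$, $s<n_1/\gamma$, only gives $\beta<1-n\gamma/n_1$. To reach every $\beta<1-\gamma$ you have two options. You can actually carry out the Morrey-space argument you allude to. Alternatively, as the paper does, apply Calder\'on--Zygmund in $\mathbb{R}^{n_1}$ on each slice $\{z=\text{const}\}$, and combine it with the Lipschitz dependence of $D_yu$ on $z$ that follows from $D_yD_zu\in L^\infty_{\mathrm{loc}}$.
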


\begin{proof}
Since $u$ is harmonic when $\gamma=0$, we assume that $\gamma\neq 0$. \medskip

(Step 1. $D_{zz}^2u\in L_{\mathrm{loc}}^{\infty}(B_1)$.)
We fix an exponent $p$ satisfying
\begin{equation*}
    n(n-1)<p<\frac{n_1}{|\gamma|},
\end{equation*}
and set
\begin{equation*}
    \tau\coloneqq\frac{p}{n-1}
    \quad \text{and} \quad
    \kappa\coloneqq\frac{p}{p-(n-1)}.
\end{equation*}
Since $p>n(n-1)$, we also fix an exponent $q$ satisfying
\begin{equation*}
    n-1<q<\frac{p-(n-1)}{n-1}
\end{equation*}
so that $\tau-\kappa q>0$. These choices of $p$, $q$, $\tau$ and $\kappa$ allow us to apply Theorem~\ref{thm-log-hessian-estimate-strong} and Proposition~\ref{prop-log-local-maximum-strong} below.

For $e\in\{0\}\times\mathbb{R}^{n_2}$ with $|e|=1$ and
$0<|h|<1/4$, we define the second difference quotient
\begin{equation*}
    v_{h,e}(x)\coloneqq
    \frac{u(x+he)+u(x-he)-2u(x)}{h^2} \quad \text{for $x\in B_{1/2}$},
\end{equation*}
and set
\begin{equation*}
    w_{h,e}\coloneqq\sqrt{1+v_{h,e}^2} \quad \text{and} \quad M_{h,e}\coloneqq\sup_{B_{1/6}}w_{h,e}.
\end{equation*}
Then we observe the following.
\begin{enumerate}[(i)]
    \item Since $\|D^2w_{h, e}\| \leq \|D^2v_{h, e}\|+|Dv_{h, e}|^2$, we have $w_{h, e} \in W^{2, n}_{\mathrm{loc}}(B_{1/2})$.
    
     \item Since the coefficients of $G_{\gamma}$ are independent of $z$, we have $G_{\gamma}v_{h,e}=0$ in $B_{1/2}$. Hence, it follows from a direct calculation that
     \begin{equation*}
G_{\gamma}w_{h,e}=\frac{|y|^{\gamma}|D_yv_{h,e}|^2+|D_zv_{h,e}|^2}
    {(1+v_{h,e}^2)^{3/2}}
    \geq 0
    \quad \text{a.e. in $B_{1/2}$}.
\end{equation*}
In particular, $w_{h,e}$ is a nonnegative $L^n$-strong subsolution of $\mathcal{M}_{\lambda_{\gamma},1}^+(D^2w_{h,e})\geq0$ in $B_{1/2}$.
    
    \item Since $1\leq w_{h,e}(x)\leq 1+|v_{h,e}(x)|
        \leq 1+\max\{\underline{\Theta}_u(x;B_1),\overline{\Theta}_u(x;B_1)\}$,
   we apply Theorem~\ref{thm-log-hessian-estimate-strong} to $\pm u$ to obtain
\begin{equation*}
    \begin{aligned}
        \int_{B_{1/2}}(\log w_{h,e})^{\tau}\,\mathrm{d}x
       &\leq
        \int_{B_{1}}
        \log^{\tau}(1+\underline{\Theta}_u(x;B_1))\,\mathrm{d}x
        +\int_{B_{1}}
        \log^{\tau}(1+\overline{\Theta}_u(x;B_1))\,\mathrm{d}x\\
        &\leq
        C\left(
        1+\|\lambda_{\gamma}^{-1}\|_{L^p(B_1)}^p
        +\log^{\tau}(1+\|u\|_{L^{\infty}(B_1)})
        \right).
    \end{aligned}
\end{equation*}
\end{enumerate}

We are now ready to prove the desired Hessian estimates with respect to the $z$-variables. If $M_{h,e}\leq 2$, there is nothing to prove. Otherwise, applying Proposition~\ref{prop-log-local-maximum-strong} to $w_{h,e}$ on the rescaled ball gives
\begin{equation*}
    \begin{aligned}
        M_{h,e}
        &\leq
        2\exp\left[
        c\Gamma_p(B_{1/2})^{\frac{\kappa(n-1)}{\tau-\kappa q}}
        \left(
        \int_{B_{1/2}}
        (\log w_{h,e})^{\tau}\,\mathrm{d}x
        \right)^{\frac{1}{\tau-\kappa q}}
        \right]\\
        &\leq K=K(n, p, q, \|\lambda_{\gamma}^{-1}\|_{L^p(B_1)}, \|u\|_{L^{\infty}(B_1)}).
    \end{aligned}
\end{equation*}
In particular, $K$ is independent of $h$, $e$, and the $W^{2,n}$-norm of $u$.
Since $|v_{h,e}|\leq w_{h,e}$, we conclude that
\begin{equation*}
    \sup_{B_{1/6}}|v_{h,e}|\leq K.
\end{equation*}
Letting $h\to 0$ and applying a standard covering argument, we conclude that
\begin{equation*}
    D_{zz}^2u\in L_{\mathrm{loc}}^{\infty}(B_1).
\end{equation*}

(Step 2. $u\in C_{z,\mathrm{loc}}^{\infty}(B_1)$.)
We use the bootstrap argument with the aid of Step 1. For $e\in\{0\}\times\mathbb{R}^{n_2}$ with $|e|=1$, let
\begin{equation*}
    \delta_{h,e}u(x)\coloneqq
    \frac{u(x+he)-u(x)}{h}.
\end{equation*}
Again, $\delta_{h,e}u$ is an $L^n$-strong solution of $G_{\gamma}(\delta_{h,e}u)=0$ in $U \Subset B_1$, provided that $|h|$ is sufficiently small.
By Step 1 and interpolation in the $z$-variables, we have
\begin{equation*}
    \|\delta_{h,e}u\|_{L^{\infty}(U)} \leq K_U,
\end{equation*}
where $K_U$ is independent of $h$ and $e$. Therefore, applying Step 1 to $\delta_{h,e}u$ on smaller domains, we obtain
\begin{equation*}
    \sup_{e\in\{0\}\times S^{n_2-1}}
    \sup_{0<|h|<h_0}
    \|D_{zz}^2(\delta_{h,e}u)\|_{L^{\infty}(U')}<\infty
    \quad \text{whenever $U'\Subset U\Subset B_1$},
\end{equation*}
where $h_0>0$ is sufficiently small.
Letting $h\to 0$, we find that
\begin{equation*}
    D_{zzz}^3u\in L_{\mathrm{loc}}^{\infty}(B_1).
\end{equation*}
Repeating this argument with iterated first difference quotients on smaller domains gives
\begin{equation*}
    D_z^{\mu}u\in L_{\mathrm{loc}}^{\infty}(B_1)
    \quad \text{for every multi-index $\mu$ involving only the $z$-variables}.
\end{equation*}

(Step 3. Regularity in the remaining directions.)
We may write the equation as
\begin{equation}\label{eq-Grushin-Poisson}
    \Delta u=(1-|y|^{-\gamma})\Delta_zu,
\end{equation}
and distinguish the following two cases.

\noindent\emph{Case 1: $\gamma<0$.}
Let $\mu$ be a multi-index involving only the $z$-variables, and
set $v=D_z^{\mu}u$.
Differentiating \eqref{eq-Grushin-Poisson} in the $z$-variables and using Step 2,
we obtain
\begin{equation*}
    \Delta v=(1-|y|^{-\gamma})\Delta_zv
    \in L_{\mathrm{loc}}^{\infty}(B_1).
\end{equation*}
Thus, the interior Calder\'on--Zygmund estimates and the Morrey embedding imply that $ v=D_z^{\mu}u\in C_{\mathrm{loc}}^{1,\alpha}(B_1)$ for every $\alpha \in (0,1)$. In particular, since $-\gamma \in (0,1)$, we have
\begin{equation*}
    (1-|y|^{-\gamma})\Delta_zu \in C_{\mathrm{loc}}^{0,-\gamma}(B_1).
\end{equation*}
The interior Schauder estimate applied to
\eqref{eq-Grushin-Poisson} therefore gives
$u\in C_{\mathrm{loc}}^{2,-\gamma}(B_1)$.

\noindent\emph{Case 2: $\gamma>0$.}
Let $\mu$ be a multi-index involving only the $z$-variables, and set $v=D_z^{\mu}u$. Differentiating \eqref{eq-Grushin-Poisson} in the $z$-variables and using Step 2, we obtain
\begin{equation*}
    \Delta v=(1-|y|^{-\gamma})\Delta_zv \in L^s_{\mathrm{loc}}(B_1) \quad \text{for every $s \in (1, n_1/\gamma)$}.
\end{equation*}
Thus, the interior Calder\'on--Zygmund estimates imply that $v=D_z^{\mu}u\in W_{\mathrm{loc}}^{2,s}(B_1)$ for every $s \in (1, n_1/\gamma)$. If we  choose $|\mu|=1$ and $s>n$, then the Morrey embedding shows that $D_{z_j}u \in C^{1,\alpha}_{\mathrm{loc}}(B_1)$ for some $\alpha \in (0,1)$; in particular, we have
\begin{equation*}
    D_yD_zu\in L_{\mathrm{loc}}^{\infty}(B_1).
\end{equation*}
Together with Step 1, this implies that $D_zu$ is locally
Lipschitz in all variables, and $D_yu$ is locally Lipschitz in
the $z$-variables.

To obtain further regularity of $D_yu$ in the $y$-variables, we observe that
\begin{equation*}
    \Delta_yu(\cdot,z)
    =-|y|^{-\gamma}\Delta_zu(\cdot,z)
    \quad \text{in $B_{2r}^{n_1}$} \quad \text{for each fixed $z \in B_{2r}^{n_2}$.}
\end{equation*}
The interior Calder\'on--Zygmund estimates in $\mathbb{R}^{n_1}$
and the corresponding Morrey embedding yield that $u \in W^{2, s}_{y,\mathrm{loc}}(B_{2r}^{n_1})\subset C^{1, 1-n_1/s}_{y,\mathrm{loc}}(B_{2r}^{n_1})$ for $s \in (n_1, n_1/\gamma)$ with the estimate
\begin{equation*}
    \sup_{z\in B_r^{n_2}}
    \|u(\cdot,z)\|_{C_y^{1,\beta}(B_r^{n_1})}<\infty
    \quad \text{for every $\beta \in (0, 1-\gamma)$}.
\end{equation*}
Combining these estimates with the local Lipschitz bounds above, we conclude that
\begin{equation*}
    u\in C_{\mathrm{loc}}^{1,\beta}(B_1)
    \quad \text{for every $\beta \in (0, 1-\gamma)$}. \qedhere
\end{equation*}
\end{proof}

\begin{remark}[Viscosity solutions of Grushin equations]\label{rmk-Grushin-viscosity}
Let $u \in C(B_1)$ be a $C$-viscosity solution of $G_{\gamma}u=0$ in $B_1$ with $|\gamma|<n_1/(n(n-1))$. Suppose that there exists a family of functions $\{u_{\varepsilon}\}_{\varepsilon \in (0,1)} \subset C(B_1)$ such that 
\begin{itemize}
    \item $u_{\varepsilon}\to u$ locally uniformly as $\varepsilon\to0$;

    \item $u_{\varepsilon}$ is a $C$-viscosity solution of the regularized Grushin equation
\begin{equation*}
    G_{\gamma,\varepsilon}u_{\varepsilon}\coloneqq
    \left(\frac{|y|^2+\varepsilon^2}{1+\varepsilon^2}\right)^{\gamma/2}\Delta_yu_{\varepsilon}+\Delta_zu_{\varepsilon}=0.
\end{equation*}
\end{itemize}

Then $u$ satisfies the same regularity results obtained in Proposition~\ref{prop-Grushin-strong}. Indeed, since the normalized ellipticity satisfies
\begin{equation*}
    \lambda_{\gamma,\varepsilon}(y)
    =\left(\frac{|y|^2+\varepsilon^2}{1+\varepsilon^2}\right)^{|\gamma|/2}
    \geq |y|^{|\gamma|},
\end{equation*}
$\|\lambda_{\gamma,\varepsilon}^{-1}\|_{L^p(B_1)}$ is bounded independently of $\varepsilon$ whenever $n(n-1)<p<n_1/|\gamma|$. Moreover, local uniform convergence implies that $\{u_\varepsilon\}$ is locally uniformly bounded as $\varepsilon\to0$. We repeat the proof of Proposition~\ref{prop-Grushin-strong} by using Theorem~\ref{thm-log-hessian-estimate} and \cite[Proposition~1.4]{Bow26} instead of their strong-solution counterparts, Theorem~\ref{thm-log-hessian-estimate-strong} and Proposition~\ref{prop-log-local-maximum-strong}. Thus, the same local regularity estimates hold uniformly as $\varepsilon\to0$ and the conclusion follows by passing to the limit.

On the other hand, the approximation assumption cannot be omitted, since these
regularity results need not hold for arbitrary $C$-viscosity solutions of the Grushin equation. Indeed, let $n_1=1$ and $0<\gamma<1/(n(n-1))$.
The function $u(y,z)=|y|$ is a $C$-viscosity solution of
\begin{equation*}
    |y|^\gamma u_{yy}+\Delta_z u=0;
\end{equation*}
see \cite{KL25, Bow26} for details. However, it is not $C^1$ across $\{y=0\}$ and therefore does not satisfy the conclusion of Proposition~\ref{prop-Grushin-strong}.
In particular, we conclude that this solution cannot be obtained as a locally
uniform limit of solutions to the above regularized equations.
\end{remark}

\begin{remark}[Sharpness of Proposition~\ref{prop-Grushin-strong}]
\label{rmk-Grushin-exponent}
The following example illustrates the essential dependence
of regularity on the sign of $\gamma$, and shows that the
H\"older exponents in Proposition~\ref{prop-Grushin-strong}
are sharp for $\gamma<0$ and nearly sharp for $\gamma>0$.

For $0<|\gamma|<n_1/(n(n-1))$, consider
\begin{equation*}
    w_{\gamma}(y,z)
    \coloneqq
    \frac{z_1^2}{2}
    -\frac{|y|^{2-\gamma}}{(2-\gamma)(n_1-\gamma)}.
\end{equation*}
Then we have $\Delta_yw_{\gamma}=-|y|^{-\gamma}$ and $\Delta_zw_{\gamma}=1$ for $y \neq 0$, and hence $G_{\gamma}w_{\gamma}=0$ a.e. Since $w_{\gamma}\in W_{\mathrm{loc}}^{2,n}(B_1)$, it follows that $w_{\gamma}$ is an $L^n$-strong solution of the Grushin equation covered by Proposition~\ref{prop-Grushin-strong}.

The function $w_{\gamma}$ is smooth in $z$.
When $\gamma<0$, it belongs to $C_{\mathrm{loc}}^{2,-\gamma}$
and has no higher H\"older exponent for its second derivatives
across $\{y=0\}$.
When $\gamma>0$, it belongs to $C_{\mathrm{loc}}^{1,1-\gamma}$
but not to $C_{\mathrm{loc}}^{1,\beta}$ for any
$\beta \in (1-\gamma, 1)$.
\end{remark}

\begin{remark}[Regularity in invariant directions]
\label{rmk-invariant-directions}
The argument in Steps 1 and 2 of
Proposition~\ref{prop-Grushin-strong} also applies to linear
equations whose coefficients are independent of some variables.
More precisely, let
$x=(y,z)\in\mathbb{R}^{n_1}\times\mathbb{R}^{n_2}$ and consider
\begin{equation*}
    Lu\coloneqq\tr(A(y)D^2u)=0 \quad \text{in $B_1$},
\end{equation*}
where $A(y)$ is a measurable symmetric matrix satisfying
\begin{equation*}
    0<\lambda(y)I\leq A(y)\leq I,
    \quad
    \lambda^{-1}\in L^p(B_1),
    \quad p>n(n-1).
\end{equation*}
If $u\in W_{\mathrm{loc}}^{2,n}(B_1)$ is an
$L^n$-strong solution of $Lu=0$, then the same argument as in the proof of Proposition~\ref{prop-Grushin-strong} shows that
\begin{equation*}
    D_z^{\mu}u\in L_{\mathrm{loc}}^{\infty}(B_1)
    \quad \text{for every multi-index $\mu$ involving only the $z$-variables}.
\end{equation*}
Further regularity in the $y$-variables requires additional structure on $A$.
\end{remark}


\begin{thebibliography}{10}

\bibitem{Ale66}
A.~D. Aleksandrov.
\newblock Majorants of solutions of linear equations of order two.
\newblock {\em Vestnik Leningrad. Univ.}, 21(1):5--25, 1966.

\bibitem{ASS12}
S.~N. Armstrong, L.~E. Silvestre, and C.~K. Smart.
\newblock Partial regularity of solutions of fully nonlinear, uniformly
  elliptic equations.
\newblock {\em Comm. Pure Appl. Math.}, 65(8):1169--1184, 2012.

\bibitem{AS14}
S.~N. Armstrong and C.~K. Smart.
\newblock Regularity and stochastic homogenization of fully nonlinear equations
  without uniform ellipticity.
\newblock {\em Ann. Probab.}, 42(6):2558--2594, 2014.

\bibitem{Bak61}
I.~J. Bakel\cprime~man.
\newblock On the theory of quasilinear elliptic equations.
\newblock {\em Sibirsk. Mat. \v{Z}.}, 2:179--186, 1961.

\bibitem{BS21}
P.~Bella and M.~Sch\"{a}ffner.
\newblock Local boundedness and {H}arnack inequality for solutions of linear
  nonuniformly elliptic equations.
\newblock {\em Comm. Pure Appl. Math.}, 74(3):453--477, 2021.

\bibitem{Bow26}
D.~Bowman.
\newblock Harnack inequality for non-uniformly elliptic equations in
  non-divergence form.
\newblock {\em arXiv preprint arXiv:2604.13303}, 2026.

\bibitem{CCKS96}
L.~Caffarelli, M.~G. Crandall, M.~Kocan, and A.~Swi\polhk{e}ch.
\newblock On viscosity solutions of fully nonlinear equations with measurable
  ingredients.
\newblock {\em Comm. Pure Appl. Math.}, 49(4):365--397, 1996.

\bibitem{Caf89}
L.~A. Caffarelli.
\newblock Interior a priori estimates for solutions of fully nonlinear
  equations.
\newblock {\em Ann. of Math. (2)}, 130(1):189--213, 1989.

\bibitem{CC95}
L.~A. Caffarelli and X.~Cabr\'{e}.
\newblock {\em Fully nonlinear elliptic equations}, volume~43 of {\em American
  Mathematical Society Colloquium Publications}.
\newblock American Mathematical Society, Providence, RI, 1995.

\bibitem{CZ61}
A.-P. Calder\'on and A.~Zygmund.
\newblock Local properties of solutions of elliptic partial differential
  equations.
\newblock {\em Studia Math.}, 20:171--225, 1961.

\bibitem{CIL92}
M.~G. Crandall, H.~Ishii, and P.-L. Lions.
\newblock User's guide to viscosity solutions of second order partial
  differential equations.
\newblock {\em Bull. Amer. Math. Soc. (N.S.)}, 27(1):1--67, 1992.

\bibitem{DGio57}
E.~De~Giorgi.
\newblock Sulla differenziabilit\`a{} e l'analiticit\`a{} delle estremali degli
  integrali multipli regolari.
\newblock {\em Mem. Accad. Sci. Torino. Cl. Sci. Fis. Mat. Nat. (3)}, 3:25--43,
  1957.

\bibitem{DK11}
H.~Dong and S.~Kim.
\newblock Partial {S}chauder estimates for second-order elliptic and parabolic
  equations.
\newblock {\em Calc. Var. Partial Differential Equations}, 40(3-4):481--500,
  2011.

\bibitem{DK19}
H.~Dong and S.~Kim.
\newblock Partial {S}chauder estimates for second-order elliptic and parabolic
  equations: a revisit.
\newblock {\em Int. Math. Res. Not. IMRN}, (7):2085--2136, 2019.

\bibitem{FKS82}
E.~B. Fabes, C.~E. Kenig, and R.~P. Serapioni.
\newblock The local regularity of solutions of degenerate elliptic equations.
\newblock {\em Comm. Partial Differential Equations}, 7(1):77--116, 1982.

\bibitem{GT01}
D.~Gilbarg and N.~S. Trudinger.
\newblock {\em Elliptic partial differential equations of second order}.
\newblock Classics in Mathematics. Springer-Verlag, Berlin, 2001.
\newblock Reprint of the 1998 edition.

\bibitem{Gru70}
V.~V. Gru{\v s}in.
\newblock A certain class of hypoelliptic operators.
\newblock {\em Mat. Sb. (N.S.)}, 83(125):456--473, 1970.

\bibitem{KL25}
J.~Kim and S.-C. Lee.
\newblock Regularity for solutions of non-uniformly elliptic equations in
  non-divergence form.
\newblock {\em Rev. Mat. Iberoam.}, 41(6):2335--2356, 2025.

\bibitem{KLY24}
T.~Kim, K.-A. Lee, and H.~Yun.
\newblock Generalized {S}chauder theory and its application to
  degenerate/singular parabolic equations.
\newblock {\em Math. Ann.}, 390(4):6049--6109, 2024.

\bibitem{KS79}
N.~V. Krylov and M.~V. Safonov.
\newblock An estimate for the probability of a diffusion process hitting a set
  of positive measure.
\newblock {\em Dokl. Akad. Nauk SSSR}, 245(1):18--20, 1979.

\bibitem{KS80}
N.~V. Krylov and M.~V. Safonov.
\newblock A property of the solutions of parabolic equations with measurable
  coefficients.
\newblock {\em Izv. Akad. Nauk SSSR Ser. Mat.}, 44(1):161--175, 239, 1980.

\bibitem{Lin86}
F.-H. Lin.
\newblock Second derivative {$L^p$}-estimates for elliptic equations of
  nondivergent type.
\newblock {\em Proc. Amer. Math. Soc.}, 96(3):447--451, 1986.

\bibitem{Mal18}
D.~Maldonado.
\newblock On certain degenerate and singular elliptic {PDE}s {I}:
  {N}ondivergence form operators with unbounded drifts and applications to
  subelliptic equations.
\newblock {\em J. Differential Equations}, 264(2):624--678, 2018.

\bibitem{Mon14}
A.~Montanari.
\newblock Harnack inequality for a subelliptic {PDE} in nondivergence form.
\newblock {\em Nonlinear Anal.}, 109:285--300, 2014.

\bibitem{Moo19}
C.~Mooney.
\newblock A proof of the {K}rylov-{S}afonov theorem without localization.
\newblock {\em Comm. Partial Differential Equations}, 44(8):681--690, 2019.

\bibitem{Mos60}
J.~Moser.
\newblock A new proof of {D}e {G}iorgi's theorem concerning the regularity
  problem for elliptic differential equations.
\newblock {\em Comm. Pure Appl. Math.}, 13:457--468, 1960.

\bibitem{Mos61}
J.~Moser.
\newblock On {H}arnack's theorem for elliptic differential equations.
\newblock {\em Comm. Pure Appl. Math.}, 14:577--591, 1961.

\bibitem{NT23}
T.~M. Nascimento and E.~V. Teixeira.
\newblock New regularity estimates for fully nonlinear elliptic equations.
\newblock {\em J. Math. Pures Appl. (9)}, 171:1--25, 2023.

\bibitem{Nas58}
J.~Nash.
\newblock Continuity of solutions of parabolic and elliptic equations.
\newblock {\em Amer. J. Math.}, 80:931--954, 1958.

\bibitem{Puc66}
C.~Pucci.
\newblock Limitazioni per soluzioni di equazioni ellittiche.
\newblock {\em Ann. Mat. Pura Appl. (4)}, 74:15--30, 1966.

\bibitem{Sav07}
O.~Savin.
\newblock Small perturbation solutions for elliptic equations.
\newblock {\em Comm. Partial Differential Equations}, 32(4-6):557--578, 2007.

\bibitem{STV21a}
Y.~Sire, S.~Terracini, and S.~Vita.
\newblock Liouville type theorems and regularity of solutions to degenerate or
  singular problems part {I}: even solutions.
\newblock {\em Comm. Partial Differential Equations}, 46(2):310--361, 2021.

\bibitem{STV21b}
Y.~Sire, S.~Terracini, and S.~Vita.
\newblock Liouville type theorems and regularity of solutions to degenerate or
  singular problems part {II}: odd solutions.
\newblock {\em Math. Eng.}, 3(1):Paper No. 5, 50, 2021.

\bibitem{Tru71}
N.~S. Trudinger.
\newblock On the regularity of generalized solutions of linear, non-uniformly
  elliptic equations.
\newblock {\em Arch. Rational Mech. Anal.}, 42:50--62, 1971.

\bibitem{Tru73}
N.~S. Trudinger.
\newblock Linear elliptic operators with measurable coefficients.
\newblock {\em Ann. Scuola Norm. Sup. Pisa Cl. Sci. (3)}, 27:265--308, 1973.

\bibitem{Wan92}
L.~Wang.
\newblock On the regularity theory of fully nonlinear parabolic equations. {I}.
\newblock {\em Comm. Pure Appl. Math.}, 45(1):27--76, 1992.

\bibitem{Wan92a}
L.~Wang.
\newblock On the regularity theory of fully nonlinear parabolic equations.
  {II}.
\newblock {\em Comm. Pure Appl. Math.}, 45(2):141--178, 1992.

\end{thebibliography}

\end{document}